\newtheorem{theorem}{Theorem}[section]
\newtheorem{lemma}[theorem]{Lemma}
\newtheorem{proposition}[theorem]{Proposition}
\newtheorem{corollary}[theorem]{Corollary}
\newtheorem{remark}[theorem]{Remark}
\newtheorem{example}[theorem]{Example}
\newtheorem{definition}[theorem]{Definition}
\numberwithin{equation}{section}
\newcommand{\Q}{{\mathbb{Q}}}
\renewcommand{\epsilon}{\varepsilon}
\begin{document}

\title{The Special Concave Toric Domain for The Rotating Kepler Problem } 
\author{Amin Mohebbi}
\ifpdf

\keywords{}

\maketitle

\section*{Abstracts}
The Rotating Kepler Problem (RKP) emerges as a fundamental model in celestial mechanics, particularly as a limiting case of the circular restricted three-body problem. It provides a tractable yet rich framework for exploring periodic orbits, energy levels, and symplectic structures. In this paper, we study the RKP with energy values less than or equal to the critical threshold $-\frac{3}{2}$. We apply two key symplectic regularizations, the Ligon–Schaaf and Levi–Civita transformations, to reveal a bounded component in the phase space of the RKP.

Within this framework, we construct a special concave toric domain (SCTD) associated with the RKP, providing a concrete setting to compute embedded contact homology (ECH) capacities for energy levels below the critical value. The SCTD provides a geometric framework for rigorously analyzing symplectic embedding problems and energy constraints in dynamical systems. Finally, we introduce a novel combinatorial tree structure, inspired by the Stern–Brocot tree, which encodes the energy data and facilitates the computation of ECH capacities on this bounded component. Our results contribute to the broader understanding of symplectic embeddings in celestial mechanics and offer new tools for the study of Hamiltonian dynamics in rotating systems.

\section{Introduction}
In this paper, we study the Rotating Kepler Problem (RKP) and introduce a bounded component that arises when the critical energy value is less than or equal to  $-\dfrac{3}{2}$.
This bounded region becomes evident through a sequence of symplectic regularizations applied to the RKP Hamiltonian. We begin with the Ligon–Schaaf regularization, which can be viewed as a global extension of the Belbruno–Moser–Osipov regularization. Below the first critical energy level, these regularizations transform the bounded components of the planar circular restricted three-body problem into fiberwise star-shaped hypersurfaces within the cotangent bundle over the 2-sphere $T^\ast S^2$.
This approach involves interchanging the roles of base and fiber—that is, interpreting momentum as position and position as momentum—and then compactifying the phase space by adding a fiber at infinity, corresponding to collisions where momentum becomes unbounded.

In this work, we focus on a particular case of the Ligon–Schaaf regularization adapted to two dimensions and negative energy levels. This transformation maps negative-energy trajectories of the Kepler problem to geodesics on the 2-sphere $S^2$, while preserving the underlying symplectic structure.
We then apply the Levi-Civita regularization, which further transforms the system into a pair of uncoupled harmonic oscillators with periodic flow. Together, these regularizations reveal the bounded component as a symplectic image within a compact domain.

In Chapter 2 (Preliminaries), we present the foundational definitions and concepts required throughout the paper. We briefly review the Restricted Three-Body Problem (R3BP), its Hamiltonian, and the classical Kepler Problem. We introduce the basics of symplectic manifolds, symplectomorphisms, and integrable systems, along with examples. The chapter concludes with a discussion of different orbit types in the Kepler problem (e.g., circular, elliptical, periodic, and synodic), and we specify the energy levels associated with each type of orbit.

In Chapter 3, we focus on the Hamiltonian formulation of the Rotating Kepler Problem. This Hamiltonian is expressed as the sum of the Keplerian Hamiltonian and the angular momentum term. We demonstrate that the RKP is an integrable system, identify its critical energy value, and investigate the behavior of orbits based on energy levels using the Runge–Lenz vector.

Chapter 4 introduces the Ligon–Schaaf regularization in detail. Since we focus on negative energy levels in two dimensions, we use a special case of this regularization. We show that it is a symplectomorphism, preserving the symplectic structure of the phase space. This allows us to map solutions of the RKP to geodesics on the sphere $S^2$. We then apply the Levi-Civita regularization, resulting in a mapping of the solutions to the cotangent bundle $T^\ast S^2 \setminus S^2$. The resulting Hamiltonian flow is periodic and corresponds physically to two uncoupled harmonic oscillators.

In Chapter 5, we construct a special concave toric domain (SCTD) for the RKP. We provide a detailed computation showing the existence of a convex function on the cotangent bundle of $S^2$ after applying both the Ligon–Schaaf and Levi-Civita regularizations. We define a closed region in $\mathbb{R}^2$ in which the SCTD is realized. This domain is then compared to the concave toric domain defined by Hutchings. Considering the critical energy $-\dfrac{3}{2}$, we explore three distinct scenarios:

When the energy is less than or equal to the critical value, both bounded and unbounded components exist, and the SCTD corresponds to the bounded one.

When the energy exceeds the critical value, only an unbounded component exists, and the SCTD cannot be defined.

Finally, in Chapter 6, we introduce a new tree structure, inspired by the Stern–Brocot tree. After reviewing the classical Stern–Brocot construction, we propose a modified version tailored for our purposes. This new tree plays a key role in computing Embedded Contact Homology (ECH) capacities for the RKP—an analysis to be developed in future work. The RKP is essentially the Kepler problem in the rotating coordinate system and is significant because it represents a limiting case of the restricted 3-body problem. The results obtained from this computation are important for understanding symplectic embedding in the restricted 3-body problem (R3BP).

\section{Preliminaries}
The Three Body Problem (3BP) is a 12-dimentional phase space that can be reduced by using symmetries and the presence of three-body collisions, which can't always be regularized. The Hamiltonian of the 3BP is given by 
\begin{align}
    H =  \Sigma_{i=1}^3 \dfrac{1}{2m_i} |p_i|^2 - \Sigma_{i<j} \dfrac{m_i m_j}{|q_I - q_j|}
\end{align}
and is a function on $T^* (\mathbb{R}^6 / \Delta)$ where 
\begin{align}
    \Delta \{(q_1, q_2, q_3 ,p_1, p_2, p_3) \in \mathbb{R}^6 | q_i \neq q_j , p_i \neq p_j \textit{for} i \neq j \}
\end{align}
we think of $(q_i, p_i)$ as a point in $T^* \mathbb{R}^6$ where $q_i$ is the coordinate representation on the manifold and $p_i$ is the dual coordinate basis for $T^* \mathbb{R}^6$. 

The restricted three-body problem (R3BP) is a particular case of 3BP. If we denote the weight of the third body by $m_3$ and restrict it to zero, i.e., take the limit $m_3 \longrightarrow 0$, then we obtain a restriction of the 3BP, which is called the R3BP. 

We know that the space of all covectors at point $p$ is a vector space called the cotangent space at $p$, in terms of linear algebra, it is the dual space to $T_pM$, the vector space at point $p$ on the manifold $M$. The union of all cotangent spaces at all points of $M$ is a vector bundle called the cotangent bundle, and we denote it as 
\begin{align}
    T^* M = \bigsqcup_{p \in M} T_p^* M.
\end{align}

1 : \textbf{Symplectic  manifold:} A symplectic manifold is a pair $(M, \omega)$ where $\omega$ is a manifold and $\omega \in \Omega^2(M)$ is a two-form satisfying the following conditions
\begin{itemize}
    \item[I: ] $\omega$ is closed. \\
    \item[II: ] $\omega$ is non-degenerate. 
\end{itemize}
This two-form is called the symplectic structure on $M$. 

The assumption non-degeneraty implies that a symplectic manifold is even dimensional i.e., an odd  dimensional manifold never admits a symplectic structure.

2: \textbf{Symplectomorphism:  }
Assume asymplectic manifolds $(M_1, \omega_1)$ and $(M_2, \omega_2)$- A symplectomorphism $\phi: M_1 \longrightarrow M_2$ is a diffeomorphism satisfying $\phi^+ \omega_2 = \omega_1$. 

For example, consider the cotangent bundle $T^\ast \mathbb{R}^n$ with a global coordinate $(q,p) = (q_1, \cdots , q_n, p_1, \cdots, p_n)$ and a symplectic form $\omega = \sum_{i=1}^n dq_i \wedge dq_i$, then 
\begin{align}
    \sigma:  T^\ast \mathbb{R}^n \longrightarrow T^\ast \mathbb{R}^n, \qquad \qquad
    (q,p)  \longmapsto (-p, q) 
\end{align}
is a linear symplectomorphism on $\mathbb{R}^n$.

\begin{theorem}[Noether's theorem] 
Assume that $(M, \omega)$ is a closed symplectic manifold and $F, G \in (M, \mathbb{R})$. Then the following are equivalent.
\begin{itemize}
    \item[(i): ] $G$ is an integral for the flow of $F$, that is, $[X_F, X_G] = 0$. \\
    \item[(ii): ] The flow of $G$ is a symmetry for $F$, that is, $F(\phi_F^t(x))$ is independent of $t$ for every $x \in M$. \\
    \item[(iii): ] $F$ and $G$ Poisson commute, that is, $\{F , G\} = 0$. \\
    \item[(iv): ] The flow of $X_F$ and $X_G$ commute, that is, $[X_F , X_G] = 0.$
\end{itemize}   
\end{theorem}
\begin{proof}
    \cite{key-3}
\end{proof}
The following theorem is the Hamiltonian version of Noether's theorem.
\begin{theorem}
    Suppose that $\mathfrak{g} = Lie(G)$ is a Lie group acting Hamiltonianly on a symplectic manifold $(M, \omega)$. If $H:\longrightarrow \mathbb{R}$ is a Hamiltonian that is invariant under $G$, then each $\xi \in \mathfrak{g} = Lie(G)$ gives an integral $H_\xi$ of $X_H$, or equivalently $\{H , H_\xi \} = 0$.
\end{theorem}
\begin{proof}
    \cite{key-3}
\end{proof}

\textbf{Hamiltonian System and Integrable System:} Let  $ L = L(t,x,v)$ be twice continuously differentiable function of $2n+1$ variables $(t, x_1, \cdots, x_n, v_1, \cdots , v_n)$ where $v \in \mathbb{R}^n = T^* \mathbb{R}^n$ as a tangent vector as the point $x \in \mathbb{R}^n$ that represents velocity $\dot{x}$. 

Consider the minimization problem of the integral action. 
\begin{align}
    I(x) = \int_{t_0}^{t_1} L(t, x, \dot{x}) dt
\end{align}
over the set of paths $ x \in ([(]t_0, t_1], \mathbb{R}^n)$ which satisfy the boundary condition 
\begin{align}
    x(t_0) = x_0 , \qquad \qquad x(t_1) = x_1.
\end{align}
The function $L$ is called the Lagrangian of the variational problem and a continuously differentiable path $x:[t_0, t_1] \longrightarrow \mathbb{R}^n$is called minimal if 
\begin{align}
    I(x) \leq I(x+\epsilon)
\end{align}
for every variational $\xi in C^1([t_0, t_1],  \mathbb{R}^n)$ with $\xi(t_0) = \xi (t_1) = 0$.

\begin{lemma}
    A minimal path $ x : [t_0, t_1] \longrightarrow \mathbb{R}^n$
    is a solution of the Euler-Lagrange equation 
    \begin{align} \label{p1}
        \dfrac{d}{dt} \dfrac{\partial L}{\partial v} = \dfrac{\partial L}{\partial x},
    \end{align}
    where 
\begin{align*}
\dfrac{\partial L}{\partial v} & = (\dfrac{\partial L}{\partial v_1}, \dfrac{\partial L}{\partial v_2}, \cdots , \dfrac{\partial L}{\partial v_n} ) \in \mathbb{R}^n \\
\dfrac{\partial L}{\partial x} & = (\dfrac{\partial L}{\partial x_1}, \dfrac{\partial L}{\partial x_2}, \cdots , \dfrac{\partial L}{\partial x_n} ) \in \mathbb{R}^n.
\end{align*}

 \end{lemma}
\begin{proof}
    Proof in \cite{key-6}.
\end{proof}

Consider the Legendre condition 
\begin{align} \label{p2}
    det(\dfrac{\partial^2 L}{\partial v_j \partial v_k}) \neq 0, 
\end{align}
Under this condition, the equation \ref{p1} defines a regular system of second-order ordinary differential equations. 
We can use the Legendre transformation and produce a first-order differential equation system in $2n$ variables and define new variables as follows
\begin{align} \label{p4}
    y_k = &  \dfrac{\partial L}{\partial v_x}(x,v), \qquad\qquad k = 1,2, \cdots , n \\
    \dot{y_k} = & \dfrac{d}{dt}\dfrac{\partial L}{\partial v_k} = \dfrac{\partial L}{\partial x_k}
\end{align}
where $x$ is a solution of \ref{p1}.

From the implicit function theorem and \ref{p2}, we an express $v$ locally as a function of $t, x$ and $y$ and denote it by 
\begin{align}
    v_k = G_k(t, x, y).
\end{align}

\begin{definition} \label{p3}
    The function $H$ is called Hamiltonian as follow 
    \begin{align}
        H(t, x, y) = \Sigma_{j=1} y_j \dot{x}_j - L(x, \dot{x}, t),
        \end{align}
    then 
    \begin{align}
        \dfrac{\partial H}{\partial x_k} = - \dfrac{\partial L}{\partial x_k} , \qquad \qquad \dfrac{\partial H}{\partial y_k} = G_k
    \end{align}
    where t$t, x, y $ are the variables of H. 
\end{definition}

The equation \ref{p1} transforms into the Hamiltonian differential equations
\begin{align} \label{p5}
    \dot{x} = \dfrac{\partial H}{\partial y}, \qquad \qquad \dot{y} = -\dfrac{\partial H}{\partial x}.
\end{align}

\begin{lemma}
    Let  $x: [t_0, t_1] \longrightarrow \mathbb{R}^n$ be a continuously differentiable path and $y: [t_0, t_1] \longrightarrow \mathbb{R}^n$ be a new variable by \ref{p4}. Then $x$ is a solution of the Euler-Lagrange equation  \ref{p1} and if and only if the functions $x$ and $y$ satisfy the Hamiltonian system \ref{p5}. 
\end{lemma}

\textbf{Hamiltonian Flow:} 
Let $z = (x_1,  \cdots, x_n , y_1, \cdots, y_n) \in \mathbb{R}^n$, the Hamiltonian system \ref{p5} can be written as
\begin{align}
    J_0\dot{z} = \triangledown H_t(z),
\end{align}
where $H_t(z) = H(t,z)$, and $\triangledown H_t(z)$ denotes the gradient of $H_t$ and $J_0$ denotes $2n \times 2n$ matrix 

\begin{align}
\begin{bmatrix}
\mathbb{1} & \mathbb{0} \\
\mathbb{0} & \mathbb{1}
\end{bmatrix}.
\end{align}
$J_0$ shows a rotation through $\frac{\pi}{2}$ and $J_0^2 = \mathbb{1}$.

The vector field 
\begin{align}
    X_{H_t} = -J_0 \triangledown H_t : \mathbb{R}^{2n} \longrightarrow \mathbb{R}^{2n}
\end{align}
is called Hamiltonian vector field associated to the Hamiltonian function $H_t$ or the symplectic gradient of $H_t$ i.e., $X_{H_t}$.

Assume a smooth time-dependent Hamiltonian function 
\begin{align}
    \phi_H^{t,t_0} : \mathbb{R} \times \mathbb{R}^n & \longrightarrow \mathbb{R} \\
    (t, x, y) & \longmapsto H_t(x, y)    
\end{align}
is a solution operator of \ref{p5}. It is defined by $\phi_H^{t,t_0}(z_0) = z(t)$ where $z(t)$ is the unique solution of \ref{p5} with initial condition $z(t_0) =z_0$. The domain of $ \phi_H^{t,t_0}$ is an open subset of all $z_0 \in \mathbb{R}^{2n}$ such that the solution exits on the time interval $[t_0, t_1]$ respectively $[t_0, t]$ when $t(t_0)$ and denotes it with $\Omega_{t_0, t}$.

Given the diffeomorphism 
\begin{align}
    \phi_H^{t, t_0} : \Omega_{t,t_0} \longrightarrow \Omega_{t, t_0}
\end{align}
satisfy 
\begin{align}
    \dfrac{\partial}{\partial t} \Omega_H^{t,t_0} = X_{H_t} \circ \Omega_H^{t,t_0}
\end{align}
and 
\begin{align} 
\Omega_H^{t_2,t_1} \circ \Omega_H^{t_1,t_0} =  \Omega_H^{t_2,t_0},  \qquad \qquad \Omega_H^{t_0, t_0} = id.
\end{align}
Note that, if we have a time-independent case i.e., $H_t \equiv H$ and assumption $t_0=0$. The family of $\Phi_H^t = \Phi_H^{t,t_0}$ of diffeomorphisms are called Hamiltoinan flow generated by $H$. 

\textbf{Periodic orbits:}  
The RKP has two kind of orbits, the circular orbits and periodic orbits in the RKP, which re rotating ellipses of positive eccentricity respectively rotating collision orbits. These periodic orbits refereed to as periodic of the second kind. In order that a Kepler ellipse in the inertial system becomes a periodic orbit in the rotating or synodical system its  period has to be a rotational multiple of $2\pi$, i.e.,
\begin{align}
     \tau = \dfrac{2 \pi l}{k},
\end{align}
where $k , l \in \mathbb{N}$ are relatively prime. 

Because of  the rotating coordinate system has period $2 \pi$ with respects to inertial system. While the coordinate system make $l$ turns the ellipse make $k$ turns. 

These periodic orbit never isolated therefore the periodic orbits of the second kind paper in circle families. Thus we can think of them as unparametrized simple orbits. They appear actually in two dimensional torus family. 

By the Kepler's third law, the energy $E_{k,l}$ of the ellipse is determined by its period through the formula 
\begin{align}
    E_{k,l} = - \dfrac{1}{2} (\dfrac{k}{l})^{\frac{2}{3}}.
\end{align}

\section{The Rotating Kepler Problem}
The Kepler problem in a rotating coordinate system is the RKP. It is a special case of the restricted three-body problem (R3BP) where one primary has zero mass. The Hamiltonian of the  Kepler problem is 
\begin{align}
&H:T^*(\mathbb{R}^2\setminus \{0\}) \longrightarrow \mathbb{R} \\ \nonumber
&H(q,p)= \dfrac{1}{2}|p|^2 - \dfrac{1}{|q|},
\end{align}
and the Hamiltonian of the angular momentum is 
\begin{align}
&L:T^*\mathbb{R}^2 \longrightarrow \mathbb{R} \\ \nonumber
& (q,p) \mapsto q_1p_2-q_2p_1,
\end{align}
generate the rotation flow around the origin. Therefore, the Hamiltonian of the rotating Kepler problem is
\begin{align}\label{0.1}
& K:T^*(\mathbb{R}^2 \setminus \{0\}) \longrightarrow \mathbb{R} \\ \nonumber
& K(q,p)= \dfrac{1}{2}|p|^2 - \dfrac{1}{|q|} +q_1p_2-q_2p_1, \qquad \qquad(q,p) \in T^*(\mathbb{R}^2\setminus \{0\}), 
\end{align}
the Hamiltonian system $K=H+L$ is an integrable system in the sense of Arnold-Liouville, \cite{key-3}.
\begin{lemma}
The angular momentum is preserved under the flow of $X_H$. Thus $H$ and $L$ Poisson commute.
\end{lemma}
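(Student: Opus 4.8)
The plan is to reduce the claim to the vanishing of the Poisson bracket $\{H,L\}$. Recall that if $\phi_t$ denotes the flow of the Hamiltonian vector field $X_H$, then for any smooth function $f$ one has $\frac{d}{dt}\big(f\circ\phi_t\big)=\{f,H\}\circ\phi_t$; hence $L$ is preserved along the flow of $X_H$ precisely when $\{L,H\}=0$, and by antisymmetry of the bracket this is the same as the assertion that $H$ and $L$ Poisson commute. So both halves of the statement follow at once from a single computation.

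First I would compute $\{H,L\}$ in the canonical coordinates $(q_1,q_2,p_1,p_2)$ using $\{H,L\}=\sum_{i}\big(\p_{q_i}H\,\p_{p_i}L-\p_{p_i}H\,\p_{q_i}L\big)$. The relevant partials are $\p_{p_i}H=p_i$ and $\p_{q_i}H=q_i/|q|^3$ for $H$, and for $L=q_1p_2-q_2p_1$ one has $\p_{q_1}L=p_2$, $\p_{q_2}L=-p_1$, $\p_{p_1}L=-q_2$, $\p_{p_2}L=q_1$. Substituting, the two potential terms $\mp q_1q_2/|q|^3$ cancel and the two kinetic terms $\mp p_1p_2$ cancel, giving $\{H,L\}=0$.

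Alternatively, and more conceptually, I could invoke the fact that $L$ is the moment map generating the rotation action on $T^*\R^2$, together with the manifest rotational invariance of $H$ (it depends on $(q,p)$ only through the rotation-invariant quantities $|p|^2$ and $|q|$). Noether's theorem then yields conservation of $L$ along the flow of $X_H$ immediately. Since the verification is essentially a one-line coordinate computation, there is no genuine obstacle here; the only points requiring care are the sign convention in the Poisson bracket and the chain-rule derivative of $1/|q|$, namely $\p_{q_i}(1/|q|)=-q_i/|q|^3$.
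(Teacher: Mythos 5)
Your proof is correct, and in fact you give two arguments: your primary one (the explicit coordinate computation of $\{H,L\}$) and your ``alternative'' conceptual one, which is precisely the proof the paper gives. The paper's entire argument is that $L$ is the momentum map of the Hamiltonian $SO(2)$ action on $T^*\R^2$ and that $H$, being a central-force Hamiltonian depending only on $|q|$ and $|p|^2$, is $SO(2)$-invariant, so Noether's theorem yields conservation of $L$ and hence $\{H,L\}=0$. Your coordinate computation buys self-containedness: it verifies the cancellation of the kinetic terms $\mp p_1p_2$ and the potential terms $\mp q_1q_2/|q|^3$ directly, with no appeal to the machinery of moment maps, and it makes the sign conventions explicit (your care with $\p_{q_i}(1/|q|)=-q_i/|q|^3$ is exactly where a slip would occur). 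The Noether argument buys generality and brevity: it shows at once that \emph{any} rotationally invariant Hamiltonian Poisson-commutes with $L$, not just the Kepler Hamiltonian, which is the structural point the paper wants since the same invariance underlies the integrability of $K=H+L$. Also note your opening reduction --- that preservation of $L$ under the flow of $X_H$ is equivalent, via $\frac{d}{dt}(L\circ\phi_t)=\{L,H\}\circ\phi_t$ and antisymmetry, to $\{H,L\}=0$ --- is stated as an implication in the lemma but left implicit in the paper; spelling it out as you do is a genuine improvement in rigor.
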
 
\begin{proof}
The standard $SO(2)$ action acts Hamiltonianly on $T^\ast \mathbb{R}^2$ with the momentum map $L$. Thus the Hamiltonian for the central force is $SO(2)$-invariant, so the Noether theorem implies the results.
\end{proof}
Since $H$ and $L$ Poisson commute, we have 
\begin{align}
\{K,L\} = \{H,L\} + \{L,L\}=0. 
\end{align}
Consider the Hamiltonian of the RKP as 
\begin{align}
K(q,p) = \dfrac{1}{2} ((p_1-q_2)^2 + (p_2+q_1)^2)+ \dfrac{-1}{|q|}-\dfrac{1}{2}|q|^2
\end{align}
where we define the effective potential, 
\begin{align}
U:\mathbb{R}^2\setminus \{0\} \longrightarrow \mathbb{R}, \qquad\qquad U(q) =-\dfrac{1}{|q|} - \dfrac{1}{2}|q|^2.  
\end{align}
Thus, 
\begin{align}
K(q,p)=\dfrac{1}{2}((p_1-q_2)^2+(p_2+q_1)^2) + U(q).
\end{align}
\begin{lemma}
The effective potential $U$ of the RKP has a unique critical value $-\dfrac{3}{2}$ and its critical set constant of a circle of radius 1 around the origin. 
\end{lemma}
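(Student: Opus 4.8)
The plan is to exploit the rotational symmetry of $U$ and thereby reduce the entire statement to a one-variable calculus computation. Since $U(q) = -1/|q| - \frac{1}{2}|q|^2$ depends on $q$ only through the radius $r = |q|$, I would first record $U(q) = u(r)$ with
\begin{align}
u(r) = -\frac{1}{r} - \frac{1}{2}r^2, \qquad r > 0.
\end{align}
This single-variable reformulation is the organizing idea: everything about critical points and critical values of $U$ on $\mathbb{R}^2 \setminus \{0\}$ should be readable off from $u$.

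Next I would compute the gradient. Using $\nabla r = q/|q|$ on $\mathbb{R}^2 \setminus \{0\}$, the chain rule gives $\nabla U(q) = u'(r)\, q/|q|$. Because the unit radial field $q/|q|$ never vanishes away from the origin, $q$ is a critical point of $U$ precisely when $u'(r) = 0$. This step is what converts a gradient equation in two variables into a single scalar equation, and it is the only place where the punctured domain is used in an essential way.

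Differentiating, $u'(r) = 1/r^2 - r$, so the critical equation $u'(r) = 0$ is equivalent to $r^3 = 1$. On the domain $r > 0$ this has the unique solution $r = 1$, and evaluating there gives
\begin{align}
u(1) = -1 - \frac{1}{2} = -\frac{3}{2},
\end{align}
the asserted unique critical value. Pulling this back, the critical set is exactly $\{\, q : |q| = 1 \,\}$, the circle of radius $1$ centered at the origin.

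I do not expect any genuine analytic obstacle; the statement is essentially a computation once the symmetry reduction is in place. The only two points that require care are (i) justifying that $\nabla U$ factors as $u'(r)\, q/|q|$ and hence vanishes exactly where $u'$ does, which rests on the nonvanishing of the radial unit field, and (ii) confirming that $r^3 = 1$ has a single positive real root, so that both the critical value $-\tfrac{3}{2}$ and the critical circle $|q| = 1$ are truly unique.
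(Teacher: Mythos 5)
Your proof is correct and complete. The computation checks out: with $u(r) = -\tfrac{1}{r} - \tfrac{1}{2}r^2$ one gets $u'(r) = \tfrac{1}{r^2} - r = \tfrac{1-r^3}{r^2}$, whose unique positive zero is $r=1$, and $u(1) = -\tfrac{3}{2}$; the factorization $\nabla U(q) = u'(|q|)\,q/|q|$ on the punctured plane is exactly the right justification for reducing the two-variable critical-point equation to the scalar one. Note, however, that there is nothing in the paper to compare your argument against: the paper's ``proof'' of this lemma is only a citation to the author's thesis \cite{key-2}, so no argument is actually given there. Your rotational-symmetry reduction is the standard and essentially unavoidable route for a radially symmetric potential, and it supplies, in a self-contained way, precisely the argument the paper omits. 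If you wanted to add anything beyond the stated lemma, you could observe that $u' > 0$ on $(0,1)$ and $u' < 0$ on $(1,\infty)$, so the critical circle is a maximum of the effective potential -- this is what underlies the paper's subsequent claim that below the critical value $-\tfrac{3}{2}$ the energy hypersurface splits into a bounded and an unbounded component -- but that is not required for the statement as given.
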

\begin{proof}
\cite{key-2}.
\end{proof}
Via the projection map $\pi|_{crit(K)}^{-1}(q_1, q_2) = (q_1, q_2, q_2,-q_1)$, where $(q,p) \longrightarrow q$, the critical points of $K$ and $U$ are bijection. Hence, the critical value of $K$ coincides with the critical value of $U$ at the same critical points. 

The  RKP has a unique critical value at $-\dfrac{3}{2}$. Below this critical value, the energy hypersurface has two connected components. The projection of one of the components is bounded in configuration space where the projection of the other one is unbounded.

Consider the Runge-Lenz vector
\begin{align}
A^2= 1+2cL^2,
\end{align}
such that whose length corresponds to the eccentricity of the corresponding Kepler ellipse. 

If we substitute the Hamiltonian \ref{0.1} in the above equation, we have the following inequality, 
\begin{align}
0\leq 1+2H(K-H)^2 = 1+2K^2 H -4KH^2 +3H^3 =: p(K,H).
\end{align}
The equality $p(K, H)=0$ holds if and only if the eccentricity of the corresponding periodic orbit vanishes, i.e. when periodic orbits are circular. 
\begin{remark}
There are two types of periodic orbits in the RKP system: circular and rotating ellipses. To obtain these periodic orbits, we can either fix an energy level $c$ and calculate a family of periodic orbits by changing the mass ratio, or we can fix the mass ratio and vary the periodic orbits while keeping the energy level constant.
\end{remark}

Denote the Kepler ellipse $\epsilon_\tau :[0,\pi] \longrightarrow \mathbb{R}$ where $\pi$ is period. With this relation, we can obtain a solution for the RKP as 
\begin{align}
\epsilon_\tau^{R} = e^{it}\epsilon_\tau (t) 
\end{align}
which is not a longer period. 

On the other hand, the angular momentum $L$ generates the rotation in the q-plane and the p-plane. Thus we have two cases of the orbits. 
\begin{itemize}
\item[i) ] $\epsilon_\tau $ is a circular. In this case, $\epsilon_\tau^R$ is periodic unless it is a critical point when $\tau = 2 \pi $. \\
\item[ii)] $\epsilon_\tau$ is not circle. In this case, it is a proper ellipse or a collision orbit that looks like a line. 
\end{itemize}
If we consider $\epsilon_\tau$ as an ellipse orbit. The resonance relation satisfies in 
\begin{align}
2\pi l = \tau k,
\end{align}
for some positive integer $k $ and $l$. 
\begin{lemma}
Periodic orbits in the RKP of the second kind satisfy the following rotational symmetry 
\begin{align}
\epsilon_{\tau}^R (t+\tau) = e^{2 \pi i l / k} \epsilon_{\tau}^R (t). 
\end{align}
\end{lemma}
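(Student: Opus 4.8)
The plan is to read the symmetry directly off the explicit description of a second-kind orbit as a rotated Kepler ellipse, so that the only two ingredients needed are the period of the underlying Kepler solution and the resonance relation. The starting point is the formula $\epsilon_\tau^R(t) = e^{it}\epsilon_\tau(t)$ recorded above, in which $L$ generates, in the complex configuration coordinate $q = q_1 + iq_2$, the rotation flow acting by multiplication by $e^{it}$, while $\epsilon_\tau$ is the genuinely $\tau$-periodic Kepler ellipse. The whole $t$-dependence thus factors into a rigid rotation $e^{it}$ and a truly periodic part $\epsilon_\tau(t)$, and the symmetry will come from advancing $t$ by one full Kepler period.

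Concretely, I would substitute $t+\tau$ and use $\epsilon_\tau(t+\tau) = \epsilon_\tau(t)$, which holds because $\tau$ is by definition the period of the Kepler ellipse $\epsilon_\tau$. This yields
\begin{align}
\epsilon_\tau^R(t+\tau) = e^{i(t+\tau)}\epsilon_\tau(t+\tau) = e^{i\tau}\,e^{it}\epsilon_\tau(t) = e^{i\tau}\,\epsilon_\tau^R(t),
\end{align}
so that advancing the rotating orbit by one Kepler period is exactly a rigid rotation of the plane through the angle $\tau$. It then remains only to identify this angle: invoking the resonance relation $2\pi l = \tau k$, equivalently $\tau = 2\pi l/k$, gives $e^{i\tau} = e^{2\pi i l/k}$ and hence the claimed identity. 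As a sanity check this also recovers the minimal period of the rotated orbit, since after $k$ Kepler periods the accumulated rotation is $e^{2\pi i l} = 1$ and the orbit closes up.

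The computation is immediate, so the only real care lies in the conventions. I would verify that the sign and angular speed of the flow generated by $L$ indeed produce the factor $e^{it}$ rather than $e^{-it}$ or a rescaled exponent, and that $\epsilon_\tau$ is normalized to have period exactly $\tau$ rather than a proper multiple of it; these are precisely the conventions already fixed by the definition of $\epsilon_\tau^R$ given above. Once they are in place, no estimates are required, and the resonance relation supplies all of the remaining arithmetic.
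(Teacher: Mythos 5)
Your proposal is correct and follows essentially the same argument as the paper: substitute $t+\tau$ into $\epsilon_\tau^R(t) = e^{it}\epsilon_\tau(t)$, use the $\tau$-periodicity of the Kepler ellipse $\epsilon_\tau$, and identify $e^{i\tau} = e^{2\pi i l/k}$ via the resonance relation $\tau = 2\pi l/k$. Your additional remarks on verifying the sign/speed conventions and the minimality of $\tau$ are a sensible refinement but do not change the substance of the argument.
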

\begin{proof}
The resonance condition gives us the equality $\tau = 2 \pi l / k$ and therefore we have 
\begin{align}
\epsilon_{\tau}^R (t+\tau) = e^{it+i\tau } \epsilon_\tau (t+\tau) = e^{2 \pi i l / k} e^{it} \epsilon_\tau (t) = e^{2 \pi i l / k} \epsilon_{\tau}^R (t).
\end{align}
\end{proof}
If we fix $K$, the function will be as follows. 
\begin{align}
p_K:=p(K,\cdot)
\end{align}
is a cubic polynomial in H. Now, considering $H$ to be fixed, we define the following function.
\begin{align} \label{p10}
p^H := p(\cdot ,K)
\end{align}
which is a quadratic polynomial in $K$. Note that, $K= -\dfrac{3}{2}$ is a unique critical value. 

Let $K > -\dfrac{3}{2}$ and denote the root of the cubic polynomial by $R^1(K), \: R^2(K), \: R^3(K)$ in $\mathbb{R}$ with order $R^1(K) < R^2(K) <R^3(K)$. We have the following equalities when $K=-\dfrac{3}{2}$ \cite{key-2} 
\begin{align}
R^1(-\dfrac{3}{2}) =-2, R^2(-\dfrac{3}{2})=R^3(-\dfrac{3}{2}) =-\dfrac{1}{2}.
\end{align}
If $K >-\dfrac{3}{2}$, we can extend $R^1$ to a continuous function on the whole real line such that $R^1(K)$ be unique real root of $p_K$. 

The circular orbits exist only if it holds 
\begin{align}
1+2HL^2 =0. 
\end{align}
The second kind of periodic orbits of the RKP are positive eccentricity respectively rotating collision orbits. 

A Kepler ellipse in the inertial system becomes an orbit in the rotating or synodical system. 
Since the period of the rotting coordinate system is $2\pi$, if the orbit in the rotating system is periodic, the period of the ellipse should be 
\begin{align}
\tau = \dfrac{2\pi l}{k},
\end{align} 
where $k$ and $l$ relatively prime. 
\begin{lemma}
The minimum period $\tau$ of a Kepler ellipse only depends on the energy of a periodic orbit with the equality 
\begin{align}
c_{k,l}= -\dfrac{1}{2}(\dfrac{k}{l})^{\frac{2}{3}}.
\end{align}
\end{lemma}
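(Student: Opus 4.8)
The plan is to combine Kepler's third law with the resonance condition $2\pi l=\tau k$ established above. First I would recall that for the unrotated Kepler problem governed by $H(q,p)=\tfrac{1}{2}|p|^2-\tfrac{1}{|q|}$, the equations of motion reduce to the central-force equation $\ddot q=-q/|q|^3$, i.e.\ the Newtonian two-body problem with gravitational parameter normalized to $1$. For a bounded ($H<0$) solution this is a genuine Kepler ellipse whose semi-major axis $a$ is tied to the energy by the standard relation $H=-\dfrac{1}{2a}$. In particular each negative energy level determines $a$, and conversely, so that $a$ is a function of $H$ alone.

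Next I would invoke Kepler's third law in the normalized form $\tau=2\pi a^{3/2}$. This already yields the first assertion: since $a$ depends only on $H$, the minimal period $\tau$ of the Kepler ellipse is a function of the energy alone, independent of the eccentricity or the spatial orientation of the orbit. Eliminating $a$ by means of $a=-\dfrac{1}{2H}$ then gives the closed form $\tau=2\pi\bigl(-\tfrac{1}{2H}\bigr)^{3/2}$, expressing the period directly in terms of the energy.

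The final step is to impose periodicity in the synodical (rotating) frame. By the resonance relation recorded above, a second-kind periodic orbit forces $\tau=\dfrac{2\pi l}{k}$ with $k,l$ coprime positive integers. Substituting this into the period--energy relation and solving for $H$ gives
\begin{align}
\frac{l}{k}=\left(-\frac{1}{2H}\right)^{3/2}\quad\Longrightarrow\quad H=-\frac{1}{2}\left(\frac{k}{l}\right)^{2/3}=:c_{k,l},
\end{align}
which is exactly the claimed value.

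I expect the only genuine subtlety to be the bookkeeping hidden in the phrase \emph{minimum period}: one must verify that the $\tau$ appearing in the resonance condition is the primitive period of the Kepler ellipse, so that $k/l$ is already in lowest terms and the rotating orbit $e^{it}\epsilon_\tau(t)$ first closes up at $t=k\tau=2\pi l$ rather than at some proper divisor. Once this coprimality is pinned down the remaining manipulation is purely algebraic; all the conceptual content is carried by Kepler's third law, which is precisely what guarantees that the energy alone determines the period.
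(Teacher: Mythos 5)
Your proposal is correct, but there is nothing in the paper to compare it against: the paper states this lemma bare, with no proof environment at all, implicitly deferring to the thesis \cite{key-2} and to Frauenfelder--van Koert \cite{key-3}. Your route --- the energy/semi-major-axis relation $H=-\frac{1}{2a}$ from the vis-viva equation, Kepler's third law in the normalized form $\tau=2\pi a^{3/2}$ (gravitational parameter $1$), and then the resonance condition $2\pi l=\tau k$ --- is exactly the standard derivation that those references give, and the algebra
\begin{align}
\frac{l}{k}=\left(-\frac{1}{2H}\right)^{3/2}\quad\Longrightarrow\quad H=-\frac{1}{2}\left(\frac{k}{l}\right)^{2/3}=c_{k,l}
\end{align}
checks out. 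Your closing remark about the primitive period is the right subtlety to flag, and the paper itself resolves it just above the lemma by requiring $k$ and $l$ relatively prime, so that $\tau=2\pi l/k$ is indeed the minimal period and $k\tau=2\pi l$ is the first time the rotated orbit $e^{it}\epsilon_\tau(t)$ closes up. One notational caution: the energy $c_{k,l}$ in the lemma is the Kepler energy $H$ of the orbit (the paper later writes it as $K_{k,l}$ inside the formula $L_{k,l}=\sqrt{-1/(2K_{k,l})}$, a slight abuse of notation), not the rotating-frame Jacobi energy $K=H+L$; your identification $H=c_{k,l}$ is the intended reading, and it is consistent with the paper's subsequent formulas $c_{k,l}^{\pm}=c_{k,l}\pm L_{k,l}$ for the Jacobi energies of the corresponding circular orbits.
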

For the fix Jacobi energy $K$, the angular momentum is $L=K-H$ and we can write 
\begin{align}
A^2=1+2HL^2.
\end{align}
Therefore we can determine the periodic orbit of the second kind corresponding to relatively prime positive integers $k$, $l$, if we know the energy $c$. 

If we consider the Sun-Jupiter system, we can give an astronomical description of a periodic orbit of the second kind as follows.

We denote a torus corresponding to the integers $k$ and $l$ by $T_{k,l}$. Thus using the function $p^H$, \ref{p10}, for a periodic orbit of type $(k,l)$ we have the following relations 
\begin{align}
L_{k,l} =&\sqrt{-\dfrac{1}{2K_{k,l}}} = (\dfrac{l}{k})^{\frac{1}{3}} \\
c_{k,l}^- =& c_{k,l} - L_{k,l} = -\dfrac{1}{2}(\dfrac{k}{l})^{\frac{2}{3}} - (\dfrac{l}{k})^{\frac{1}{3}} = -(\dfrac{l}{k})^{\frac{1}{3}} (\dfrac{k+2l}{2l}) \\
c_{k,l}^+ = & c_{k,l} + L_{k,l} = -\dfrac{1}{2}(\dfrac{k}{l})^{\frac{2}{3}} + (\dfrac{l}{k})^{\frac{1}{3}} = (\dfrac{l}{k})^{\frac{1}{3}} (\dfrac{-k+2l}{2l}) 
\end{align} 
Thus the energy of a periodic orbit of type $(k,l)$ can be considered  as
\begin{align}
c \in (c_{k,l}^- , c_{k,l}^+).
\end{align}
Using the above notation, we can express the interior or exterior periodic orbit of type $(k,l)$ as follows
\begin{itemize}
\item[(i)] If $k=l=1$, the critical value of the RKP is $c_{k,l}^- = \dfrac{3}{2}$ then the exterior and interior direct orbits both collapse to the critical point. \\
\item[(ii)] If $k>l$, then $|L_{k,l}|<1$ then the direct orbit is interior. \\
\item[(iii)] If $k<l$, then $|L_{k,l}|>1$ then the direct orbit is exterior. 
\end{itemize}

Let's tackle the first challenge, which is related to the energy value that falls below $-\dfrac{3}{2}$. In such cases, the RKP's bounded component transforms into a unique concave toric domain when subjected to the Ligon-Schaaf regularization and the Levi-Civita regularization. 

To achieve the objective of the first challenge, we must have a thorough understanding of the Ligon-Schaaf and Levi-Civita regularizations.

\section{Regularization}
\subsection{The Ligon-Schaaf Regularization}
The Ligon-Schaaf regularization is an effective method for regulating collisions. It is a symplectomorphism that maps the solutions of the planar Kepler problem to the geodesics on the sphere $S^2$. Unlike the Belbruno-Moser-Osipov regularization \cite{key-9}\cite{key-10}, the Ligon-Schaaf regularization does not change time. One can think of the Ligon-Schaaf regularization as a global variant of Delaunay variables.

Here we consider the negative energy of the system in dimension $n=2$. However, the Ligon-Schaaf symplectomorphism is applicable for positive energy and any dimension $n$, just like the Belbruno-Moser-Osipov regularization.

Given the form $y \mapsto <x,y>$ on $\mathbb{R}^2$ where $<x,y>$ is the standard inner product. Using this from, we  identify the phase space $P$, i.e. the cotangent bundle of $\mathbb{R}^2 \setminus \{0\}$ with the set of $(q,p)$ such that $q \in \mathbb{R}^2$, $q \neq 0$ and $ p \in \mathbb{R}^2$. 

Now we denote an open subset of $P$ which lives on the negative part of the energy with 
\begin{align}
P_- =\{ (q,p) \in P \: \: | \:\: H(q,p) < 0 \}.
\end{align} 
Take the angular momentum $L=q_1p_2 - q_2p_1$ and write the vector $A$ as $A=(A_1,A_2)$. Then we can have the following equalities 
\begin{align} \label{1.1}
\{L,A_1\} & =-A_2 \\ \nonumber
\{L,A_2\} & =A_1 \\ \nonumber
\{A_1,A_2\} & =-2HL. \nonumber
\end{align}
If we define the eccentricity vector by 
\begin{align}
\eta := \nu A,
\end{align}
where $\nu := (-2H)^{\dfrac{1}{2}}$. Hence we can write the Poisson bracket relation \ref{1.1} in terms of $\eta $ as follow 
\begin{align}
\{L,\eta_1\} =&- \eta_2 \\ \nonumber
\{L,\eta_2\} =& \eta_1 \\ \nonumber
\{\eta_1 ,\eta_2\} =& L.  \nonumber
\end{align} 
If we think of $L$ as $\eta_3$. We can recover precisely the Lie algebra of $SO(3)$. 

We define $J=(L,\eta_1, \eta_2)$ from $P_-$ to the dual of the Lie algebra $SO(3)$ as the momentum map of an infinitesimal Hamiltonian action of $SO(3)$ on $P_-$. Note that if we assume the subalgebra $SO(2)$, then we can extend this infinitesimal action to the standard infinite rotation. 

The Ligon-Schaaf regularization describes how we can map the solutions of the Kepler problem to the geodesics on the sphere $S^2$ in $\mathbb{R}^3$ such that the rotation group $SO(3)$ acts naturally. 

First, we define the phase space for the geodesics on the sphere $S^2$. 
\begin{definition}
The cotangent bundle of $S^2$ can be identify with vectors $(x,y) \in \mathbb{R}^3 \times \mathbb{R}^3$ such that $<x,x> =1$ and $<x,y>=0$. The zero section corresponds to the element $(x,0)$ where $<x,x>=1$. We denote by $T$ the complement of the zero section. 
\end{definition}
We define the angular momentum map of the infinitesimal Hamiltonian action $SO(3)$ on $T$ by 
\begin{align}
\tilde{J}:(x,y) \longrightarrow x \wedge y.
\end{align}       

From the above notation, we show that the images of the Kepler solutions are geodesics with time rescaled under the Ligon-Schaaf map factor such which depends only on the energy. In simpler terms, the Kepler solutions are mapped to the solution curves of the Delaunay Hamiltonian, which is defined as follows:
\begin{align}
\tilde{H}(x,y) = -\dfrac{1}{2} \cdot \dfrac{1}{|y|^2} =-\dfrac{1}{2} \cdot \dfrac{1}{|\tilde{J}|^2}
\end{align}
where $(x,y)\in T$. 

Now we use the above notation and assume that the Ligon-Schaaf regularization is a symplectomorphism that maps the phase space $P_-$ into the phase space $T$ and denotes it by $\Phi =\Phi_{LS}$, then define it as 
\begin{align}
\Phi =& \Phi_{LS} : P_- \longrightarrow T \\ \nonumber 
\Phi(q,p) :=& ((\sin \phi \mathcal{A})+\cos \phi \mathcal{B} , \nu (\cos \phi) \mathcal{A}+\nu (\sin \phi) \mathcal{B}), 
\end{align}
where 
\begin{align}
\mathcal{A}&= \mathcal{A}(q,p) := (|q|^{-1}q - <q,p>p,\nu^{-1}<q,p>), \\ \nonumber
\mathcal{B}&= \mathcal{B}(q,p) := (\nu^{-1}|q|p,|p|^2|q|-1),
\end{align}
and 
\begin{align}
\phi =\Phi_{LS} (q,p ):=\nu^{-1} <q,p>.
\end{align}
The Ligon-Schaaf symplectomorphism has useful properties for computing solutions to the Kepler problem on the sphere $S^2$,.
\begin{itemize}
\item[(i:)]
Let $e_{3}$ be the third standard basis vector in $\mathbb{R}^3$, which is the north pole of the sphere $S^2$. Then $\Phi$ is an analytic diffeomorphism from $P_-$ onto the open subset $T_-$ of $T$ consisting of all $(x,y) \in T$ such that $x \neq e_{3}$. \\
\item[(ii:)] $\Phi $ is a symplectomorphism. \\
\item[(iii:)] If $\gamma$ is a solution curve of the Kepler vector field $X_H$ in $P_-$, then $\Phi \circ \gamma$ is a solution curve of the Delaunay vector field $X_{\tilde{H}}$ in $T$. \\
\item[(iv:)] It holds that $J = \tilde{J} \circ \Phi$. 
\end{itemize}
The Ligon-Schaaf symplectomorphism helps us to define the action of $g$ on $P_-$ as an action on $T$. Let the action $g \in SO(3)$ and denote the obvious action $g$ on $T$ by $g_T$ and the action $g$ on $P_-$ by $g_{P_-}$. Hence we define 
\begin{align}
g_{P_-}(q,p) := \Phi^{-1} \circ g_T \circ \Phi (q,p), \:\:\:\: (q,p) \in P_-.
\end{align}
This is a well-define action. For the map $\Phi$ the identity $J=\tilde{J} \circ \Phi $ holds. 
\begin{proposition}
Suppose $\Phi $ is a map from $P_-$ to $T$. $\Phi$ satisfies $J=\tilde{J}\circ \Phi$ if and only if there exists an $\mathbb{R} / 2 \pi \mathbb{Z}$-valued function $\phi$ on $P_-$ such that $\Phi = \Phi_\phi$. 
\end{proposition}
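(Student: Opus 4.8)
The plan is to read the condition $J=\tilde J\circ\Phi$ fibrewise and to exploit the fact that the fibres of $\tilde J$ over nonzero values are circles. Fix $\mu\in\mathfrak{so}(3)^\ast\cong\mathbb R^3$ with $\mu\neq0$ and consider $\tilde J^{-1}(\mu)=\{(x,y)\in T:\ x\wedge y=\mu\}$. The defining relations $<x,x>=1$ and $<x,y>=0$ of $T$, together with $x\wedge y=\mu$, force $x$ and $y$ to lie in the plane $\mu^\perp$, pin down $\abs y=\abs\mu$, and leave exactly the freedom of rotating the oriented orthonormal pair $(x,\,y/\abs y)$ inside $\mu^\perp$; hence $\tilde J^{-1}(\mu)$ is a single circle. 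I will show that allowing the angle in the formula for $\Phi_{LS}$ to vary sweeps out precisely this circle, so that the momentum-map identity becomes equivalent to the existence of such an angle at each point.

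The computation underlying both implications is the following. Regard $\Phi_\phi$ as the map whose two components are obtained by rotating the frame $(\mathcal A,\mathcal B)$ through the angle $\phi$ inside their common plane, the second component being rescaled by $\nu$, with $\phi$ now an arbitrary $\mathbb R/2\pi\mathbb Z$-valued function. From the explicit formulas for $\mathcal A$ and $\mathcal B$ I would first verify that they form an orthonormal pair and that $\mathcal A\wedge\mathcal B$ is a unit multiple of $J(q,p)$. Granting this, the wedge product $x\wedge y$ of the two components collapses: the diagonal terms $\mathcal A\wedge\mathcal A$ and $\mathcal B\wedge\mathcal B$ vanish while the surviving terms recombine through $\cos^2\phi+\sin^2\phi=1$, so that $\tilde J(\Phi_\phi(q,p))$ is a fixed multiple of $\mathcal A\wedge\mathcal B$ independent of $\phi$. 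Matching this with $J=(L,\eta_1,\eta_2)$ via $L=q_1p_2-q_2p_1$ and $\eta=\nu A$ then proves the forward implication: every $\Phi_\phi$ satisfies $J=\tilde J\circ\Phi$.

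For the converse, suppose $J=\tilde J\circ\Phi$ and fix $(q,p)\in P_-$ with $\mu:=J(q,p)$. First I would note that $J$ is nowhere zero on $P_-$: since $\eta=\nu A$ and $\nu>0$, the vector $\eta$ vanishes only where the Runge--Lenz vector $A$ does, and there $\abs A^2=1+2HL^2=0$ forces $L\neq0$, so that $J=(L,0,0)\neq0$. Thus $\mu\neq0$ and $\tilde J^{-1}(\mu)$ is the circle described above. By the orthonormality of $\mathcal A,\mathcal B$ and the relation that $\mathcal A\wedge\mathcal B$ is proportional to $J$, the pair $(\mathcal A(q,p),\mathcal B(q,p))$ is an oriented orthonormal basis of $\mu^\perp$, so the curve $\psi\mapsto\Phi_\psi(q,p)$ is a bijective parametrisation of $\tilde J^{-1}(\mu)$ by $\mathbb R/2\pi\mathbb Z$. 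Since $\Phi(q,p)$ must lie in this fibre, there is a unique $\phi(q,p)\in\mathbb R/2\pi\mathbb Z$ with $\Phi(q,p)=\Phi_{\phi(q,p)}(q,p)$; letting $(q,p)$ vary defines the required function $\phi$ and yields $\Phi=\Phi_\phi$.

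The main obstacle is the fibrewise geometric step: verifying that $\mathcal A$ and $\mathcal B$ are genuinely orthonormal with $\mathcal A\wedge\mathcal B$ a unit multiple of $J$, and then confirming that $\Phi_\psi$ covers the whole fibre with the correct orientation, so that the angle $\phi(q,p)$ exists and is uniquely determined modulo $2\pi$. Once this identification is in place both implications follow at once, and no regularity of $\phi$ need be imposed, since the statement only asserts $\phi$ as an $\mathbb R/2\pi\mathbb Z$-valued function on $P_-$.
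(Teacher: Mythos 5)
The paper offers no proof of this proposition at all --- its ``proof'' is the single line ``In paper \cite{key-5}'' --- so the relevant comparison is with the characterization argument of Cushman--Duistermaat, and your proposal is essentially that argument: identify the nonzero fibres of $\tilde J$ as circles, observe that $(\mathcal A,\mathcal B)$ is an orthonormal frame of the plane orthogonal to $J$, note that rotating this frame sweeps out exactly the fibre, and conclude pointwise in $(q,p)$. Your supporting steps are sound: the fibre description $\tilde J^{-1}(\mu)=\{(x,\mu\wedge x):x\in S^{2}\cap\mu^{\perp}\}$ for $\mu\neq 0$ is correct, and your argument that $J$ never vanishes on $P_-$ (if the Runge--Lenz vector $A$ vanishes then $1+2HL^{2}=0$, so $L^{2}=-1/(2H)>0$ and $J=(L,0,0)\neq 0$) is exactly right, and is a point many readers would gloss over.

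Two caveats. First, the step you defer --- orthonormality of $(\mathcal A,\mathcal B)$ and proportionality of $\mathcal A\wedge\mathcal B$ to $J$ --- is where all the computation lives, and it holds only after correcting two typos in the paper: one needs $\nu=(-2H)^{-1/2}$, not the printed $\nu=(-2H)^{1/2}$ (with the printed convention $|\mathcal A|^{2}=1+\langle q,p\rangle^{2}(\nu^{-2}-\nu^{2})\neq 1$; note also that the paper's own relation $\{\eta_1,\eta_2\}=L$ together with $\{A_1,A_2\}=-2HL$ forces the exponent $-1/2$), and the second component of $\Phi_\phi$ must be $\nu(\cos\phi\,\mathcal A-\sin\phi\,\mathcal B)$, since with the printed plus sign one gets $\langle x,y\rangle=\nu\sin(2\phi)$, so $\Phi_\phi$ does not even take values in $T$, and $\tilde J\circ\Phi_\phi=-\nu\cos(2\phi)\,\mathcal A\wedge\mathcal B$ depends on $\phi$, which would make the forward implication false. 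Your reading of $\Phi_\phi$ as a frame rotation silently uses the corrected formula (the one in \cite{key-5} and \cite{key-8}); this should be made explicit. Second, you can shorten the forward direction and dodge the orientation worry you raise at the end: once orthonormality gives $\tilde J\circ\Phi_\phi=-\nu\,\mathcal A\wedge\mathcal B$ independently of $\phi$, property (iv) of the Ligon--Schaaf map already quoted in the paper ($J=\tilde J\circ\Phi_{LS}$, and $\Phi_{LS}$ is one member of the family $\Phi_\phi$) yields $\tilde J\circ\Phi_\phi=J$ for every $\phi$, with no need to compute $\mathcal A\wedge\mathcal B$ against $(L,\eta_1,\eta_2)$ by hand; the same identity fixes the orientation of the fibre parametrization used in your converse direction.
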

\begin{proof}
In paper \cite{key-5}.
\end{proof}

\subsection{The Levi-Civita Regularization}
The Levi-Civita regularization is a double cover so that your energy hypersurface becomes $S^3$ instead of $\mathbb{R}P^3$.
 In the language of physics, we can explain this double cover of the geodesic flow on $S^2$ as a Hamiltonian flow of two uncoupled harmonic oscillators. 

We denote the Levi-Civita regularization by $\mathcal{L}$ which is a 2:1 maps from $\mathbb{C}^2 \setminus \{0\}$ to $T^* S^2\setminus S^2$ as follows 
\begin{align}
\mathcal{L} : \mathbb{C}^2 \setminus (\mathbb{C} \times \{0\})& \longrightarrow T^* \mathbb{C} \setminus \mathbb{C} \\ \nonumber
(u,v) &\mapsto (\dfrac{u}{\bar{v}}, 2v^2)
\end{align} 
where $\bar{v}$ is the complex conjugate of $v$. 

This regularization works only for a 2-dimensional space, i.e. $\mathbb{C}^2$. Note that, there is a higher dimension as Levi-Civita but we do not consider here \cite{key-11}. 

We consider a 2-dimensional space and discuss the Levi-Civita transformation. We extend the Levi-Civita regularization $\mathcal{L}$ to the cotangent bundle $T^* S^2$ as follows 
\begin{align}
\mathcal{L}: \mathbb{C}^2 \setminus \{0\} \longrightarrow T^* S^2 \setminus S^2
\end{align}
where $\mathbb{C}$ is assumed to be a chart of $S^2$ via stereographic projection as the north pole. The above extension gives us a covering map with degree 2. ( See \cite{key-2} for more details). 
\begin{lemma}
A closed hypersurface $\Sigma T^* S^2$ is fiberwise star-shaped if and only if $\mathcal{L}^{-1} \Sigma \subset \mathbb{C}^2$ is star-shaped. 
\end{lemma}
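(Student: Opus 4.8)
The plan is to exploit the \emph{quadratic homogeneity} of $\mathcal{L}$ under real dilations of $\mathbb{C}^2$ and to show that this intertwines the radial ray structure of $\mathbb{C}^2\setminus\{0\}$ with the fiberwise ray structure of $T^*S^2\setminus S^2$ (the statement should read $\Sigma\subset T^*S^2$). First I would record how $\mathcal{L}$ reacts to the dilation $(u,v)\mapsto(tu,tv)$ with $t>0$: since
\begin{align}
\mathcal{L}(tu,tv)=\Big(\frac{tu}{\overline{tv}},\,2(tv)^2\Big)=\Big(\frac{u}{\bar v},\,t^2\cdot 2v^2\Big),
\end{align}
the base point $u/\bar v\in S^2$ is left fixed while the fiber coordinate $2v^2$ is rescaled by the positive factor $t^2$. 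In other words, the real radial $\mathbb{R}_{>0}$-action on $\mathbb{C}^2$ descends, through $\mathcal{L}$, to the fiberwise $\mathbb{R}_{>0}$-action (scalar multiplication in each cotangent fiber) on $T^*S^2$.

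The consequence I would draw is that $\mathcal{L}$ carries each radial ray $\{(tu_0,tv_0):t>0\}$ in $\mathbb{C}^2\setminus\{0\}$ onto a single fiber ray $\{s\cdot 2v_0^2 : s>0\}\subset T^*_{q_0}S^2$, with $q_0=u_0/\bar v_0$, the two parametrizations being related by the orientation-preserving diffeomorphism $s=t^2$ of $(0,\infty)$. Since $s\mapsto\sqrt{s}$ is a monotone bijection, a radial ray meets $\mathcal{L}^{-1}\Sigma$ in exactly one point, transversally, \emph{if and only if} its image fiber ray meets $\Sigma$ in exactly one point, transversally; here I use that $\mathcal{L}$ is a local diffeomorphism away from the zero section, so transversality is preserved. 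I would also identify the deck group: $\mathcal{L}(u',v')=\mathcal{L}(u,v)$ forces $v'=\pm v$ and correspondingly $u'=\pm u$, so the nontrivial deck transformation is $(u,v)\mapsto(-u,-v)$; consequently $\mathcal{L}^{-1}\Sigma$ is automatically invariant under this antipodal involution, and the two antipodal rays $\pm(u_0,v_0)$ are sent to the \emph{same} fiber ray.

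With these correspondences in place the equivalence is essentially bookkeeping. By definition $\mathcal{L}^{-1}\Sigma$ is star-shaped precisely when every radial ray of $\mathbb{C}^2\setminus\{0\}$ meets it once and transversally, while $\Sigma$ is fiberwise star-shaped precisely when every fiber ray of $T^*S^2\setminus S^2$ meets it once and transversally. Because $\mathcal{L}$ sets up a two-to-one surjection of radial rays onto fiber rays under which ``exactly one transversal intersection'' is preserved in both directions, the two conditions are equivalent, proving the lemma. For the forward direction one pushes the ray condition down; for the converse one lifts it, using the $\mathbb{Z}/2$-symmetry to see that both rays lying over a given fiber ray inherit the single-intersection property.

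The step I expect to be the main obstacle is \emph{the behavior over the poles}, i.e. along $v=0$. There $u/\bar v$ runs off to infinity in the stereographic chart, so the formula above does not literally describe $\mathcal{L}$ near the fiber over the north (and, after changing charts, south) pole, and one must check that the extended map $\mathbb{C}^2\setminus\{0\}\to T^*S^2\setminus S^2$ remains quadratically homogeneous and a local diffeomorphism there, so that the ray correspondence persists. I would handle this either by repeating the computation in the antipodal stereographic chart, or, more cleanly, by invoking the $SO(3)$-equivariance of the regularization to move any pole to a generic point where the chart formula applies; one should also note that star-shapedness of a fiber is intrinsic to the linear structure of the cotangent space and hence chart-independent, which is what makes the pushforward and pullback of the fiberwise notion well defined.
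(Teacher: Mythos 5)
Your proposal is correct, but there is nothing in the paper to compare it against: the lemma is stated bare, with the surrounding text deferring to the author's thesis \cite{key-2} for details, so the paper contains no proof of its own. Your argument is the standard one in this literature (cf. \cite{key-3}): the identity $\mathcal{L}(tu,tv)=\bigl(u/\bar v,\,t^{2}\cdot 2v^{2}\bigr)$ for $t>0$ shows that radial dilations of $\mathbb{C}^{2}\setminus\{0\}$ cover fiberwise dilations of $T^{*}S^{2}\setminus S^{2}$, and combined with the facts that $\mathcal{L}$ is a two-to-one local diffeomorphism and that its deck group is generated by $(u,v)\mapsto(-u,-v)$, the bookkeeping you describe (each radial ray maps bijectively onto a fiber ray via $s=t^{2}$, intersection points and transversality correspond, and $\mathcal{L}^{-1}\Sigma$ is automatically deck-invariant) does establish both implications. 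The obstacle you flag at $v=0$ is real but mild, and your chart-switching fix works; an even cleaner fix is to observe that the equivariance identity $\mathcal{L}\circ\delta_{t}=m_{t^{2}}\circ\mathcal{L}$ (with $\delta_{t}$ the radial dilation and $m_{s}$ fiberwise scalar multiplication) holds on the dense open set $\{v\neq 0\}$ and both sides are smooth on all of $\mathbb{C}^{2}\setminus\{0\}$, hence it holds everywhere by continuity. One small caveat: the ``$SO(3)$-equivariance'' you invoke as an alternative should strictly be phrased as equivariance with respect to a lifted $SU(2)$-action through the double cover, since the $SO(3)$-action on $T^{*}S^{2}$ does not literally act on $\mathbb{C}^{2}$; but your argument does not actually need this route.
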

\begin{corollary}
There exists a diffeomorphism between a fiberwise star-shaped hypersurface in $T^* S^2$ and the projective space $\mathbb{R}P^3$ if $\mathcal{L} \Sigma \subset \mathbb{C}^2$ is star-shaped. 
\end{corollary}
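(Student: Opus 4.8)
The plan is to reduce the statement to the elementary topology of free involutions on the sphere, with the Levi-Civita map serving as a degree-two cover. First I would invoke the preceding lemma: the hypothesis (which, in the notation of the lemma, says that $\mathcal{L}^{-1}\Sigma$ is star-shaped in $\mathbb{C}^2$) guarantees that $\widehat{\Sigma} := \mathcal{L}^{-1}\Sigma$ is a closed hypersurface in $\mathbb{C}^2 \cong \mathbb{R}^4$, star-shaped about the origin and missing the origin. A closed star-shaped hypersurface in $\mathbb{R}^n$ is diffeomorphic to the unit sphere $S^{n-1}$: the radial projection $x \mapsto x/|x|$ restricts to a diffeomorphism $\widehat{\Sigma} \to S^3$, whose inverse rescales each unit vector to the unique point of $\widehat{\Sigma}$ on its ray. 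This is exactly where star-shapedness is used — each ray from the origin meets $\widehat{\Sigma}$ in a single transverse point — and it yields $\widehat{\Sigma} \cong S^3$.

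Next I would pin down the covering structure of $\mathcal{L}$. Since $\mathcal{L}(u,v) = (u/\bar v,\, 2v^2)$, one has $\mathcal{L}(u,v) = \mathcal{L}(-u,-v)$, and a direct check shows these are the only two preimages of a point: $2v'^2 = 2v^2$ forces $v' = \pm v$, and then $u'/\bar{v'} = u/\bar v$ forces $u' = \pm u$ with the matching sign. Thus the deck transformation of the two-to-one cover $\mathcal{L}$ is precisely the antipodal involution $\iota(u,v) = (-u,-v)$, i.e. the restriction of $-\mathrm{id}$ on $\mathbb{R}^4$, which is free on $\mathbb{C}^2 \setminus \{0\}$. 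Because $\mathcal{L} \circ \iota = \mathcal{L}$, the set $\widehat{\Sigma}$ is $\iota$-invariant, and $\mathcal{L}$ restricts to a regular two-to-one covering $\widehat{\Sigma} \to \Sigma$ with deck group $\{\mathrm{id},\iota\}$. Consequently $\Sigma \cong \widehat{\Sigma}/\iota$.

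Finally I would transport the diffeomorphism $\widehat{\Sigma} \cong S^3$ through the quotient. The involution $\iota = -\mathrm{id}$ commutes with scalar multiplication on $\mathbb{R}^4$, hence with radial projection; therefore the radial diffeomorphism $\widehat{\Sigma} \to S^3$ intertwines $\iota|_{\widehat{\Sigma}}$ with the standard antipodal map on $S^3$. An equivariant diffeomorphism between free $\mathbb{Z}/2$-manifolds descends to a diffeomorphism of quotients, so $\Sigma \cong \widehat{\Sigma}/\iota \cong S^3/\{\pm 1\} = \mathbb{R}P^3$, as claimed.

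The main obstacle is not a hard estimate but the bookkeeping of equivariance: I must verify that the radial projection, the covering $\mathcal{L}$, and the antipodal action are mutually compatible so that the descent to the quotient is legitimate. In particular, checking that the deck transformation coincides \emph{on the nose} with the radial antipodal map — not merely with some free involution homotopic to it — is what forces the quotient to be $\mathbb{R}P^3$ rather than another free $\mathbb{Z}/2$-quotient of $S^3$; that is the step I would write out most carefully.
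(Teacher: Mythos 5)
Your proposal is correct and follows the same route the paper intends: the paper offers no written proof of this corollary, only the preceding lemma together with the remark that a star-shaped hypersurface in $\mathbb{C}^2$ is diffeomorphic to $S^3$, which twofold covers $\mathbb{R}P^3$ — exactly the outline you flesh out. Your explicit identification of the deck transformation of $\mathcal{L}(u,v)=(u/\bar v,\,2v^2)$ with the antipodal involution, and the equivariance of the radial projection, are precisely the details the paper leaves implicit, so your argument is a faithful completion of its sketch rather than a different approach.
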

Note that, a star-shaped hypersurface in $\mathbb{C} $ is diffeomorphic to the 3-dimensional sphere $S^3$ which is a twofold cover of $\mathbb{R}P^3$. 
\begin{example}
We apply Levi-Civita regularization to the Kepler problem. The Hamiltonian with respect to $u$ and $v$ is:
\begin{align}
H(u,v) = \dfrac{|u|^2}{2|v|^2} - \dfrac{1}{2|v|^2} -c,
\end{align}
where $c$ is the energy value. 

Based on the relation mentioned above, we can derive the following definition. 
\begin{align}
H'(u,v):= |v|^2 H(u,v) = \dfrac{1}{2}(|u|^2-c|v|^2 -1)
\end{align}
and for the energy zero, we have the level set 
\begin{align}
\Sigma:= H^{-1}(0) = H^{'-1}(0).
\end{align}
This is a 3-dimensional sphere for a negative energy $c$. 

The Hamiltonian flow of $H'$ on $\Sigma$ is just a parametrization of the Hamiltonian flow $H$ on $\Sigma$. The new Hamiltonian flow is periodic and physically, it is the flow of two uncoupled harmonic oscillators. 
\end{example}

\section{The special concave toric domain for the rotating Kepler problem}
In this section, we will introduce a Special Concave Toric Domain (SCTD) that is specifically designed for the RKP. This domain is both concave and toric, and we will examine its unique properties later on. Hutchings and his colleagues  have done extensive work on the concavity of toric domains, which has proven to be useful for calculating ECH capacities and answering symplectic embedding questions \cite{key-12}. Lastly, you will see the first significant aspect of my project, which aims to comprehend symplectic embedding questions for the R3BP with small mass ratios.

To compute this concave toric domain, we use the stereographic projection and transfer the cotangent bundle of $\mathbb{R}^2$ to the cotangent bundle of $S^2$. 

Since the Ligon-Schaaf symplectomorphism interchanges the Hamiltonian of the Kepler problem with the Delaunay Hamiltonian. Therefore, we get the solution of the Kepler problem as geodesics on the cotangent bundle $T^*S^2$.

Angular momentum is responsible for generating rotation. When it comes to the RKP, its Hamiltonian is obtained by adding angular momentum to the Kepler problem's Hamiltonian. The Ligon-Schaaf symplectomorphism interchanges the angular momentum on the plane with an angular momentum component on the sphere. As a result, the Ligon-Schaaf symplectomorphism pulls back the Hamiltonian of the RKP to a Hamiltonian defined on the cotangent bundle $S^2$ minus its zero section. The Levi-Civita map is a 2:1 map between $\mathbb{C}^2$ minus the origin and the cotangent bundle of $S^2$ minus the zero section.

Let's start by considering the phase space $T$ of the geodesic solutions of the RKP. Using Levi-Civita regularization, we can map this phase space to the space $\mathbb{C}^2$. This mapping results in a double cover, which allows us to define a special concave toric domain that is well-define for the RKP. 

\subsection{Construction} 
Here, we will use the above notation and definitions to compute the concave toric domain of the RKP in several steps.

Given the unit sphere $S^2$ and denote the north pole of it in $\mathbb{R}^3$ with $N=(0,0,1)$. Take a point $x=(x_1,x_2,x_3)$ on $S^2$ and a covector on the tangent space of $S^2$ at $x$ with $y=(y_1,y_2,y_3)$ such that $x \neq N$, $x\cdot x =1$ and $ x \cdot y =0$. 

Now we use the stereographic transformation and map the cotangent bundle of the space $\mathbb{R}^2$ to the cotangent bundle of the sphere $S^2$. In other words, we have 
\begin{align}
T^* \mathbb{R}^2 &\longrightarrow T^* S^2 \\ \nonumber
(q,p) & \mapsto (x,y). 
\end{align}
such that the following equalities hold
\begin{align} \label{c1}
x_k = & \dfrac{2q_k}{(q^2 +1)} \, , \qquad x_3 =\dfrac{(q^2-1)}{(q^2 +1)} \\ \nonumber
y_k =& \dfrac{(q+1)p_k}{2} - (q \cdot p) q_k \, , \qquad y_3 = q \cdot p \nonumber
\end{align}
where $k=1,2$. 

These are canonical transformations in the sense that the symplectic form $\Sigma_{k=1}^2 dq_k \wedge dp_k$ and the restriction of $\Sigma_{k=1}^3 dx_k\wedge dy_k $ to $ T^*S^2$ match. 
Given the Delaunay Hamiltonian 
\begin{align}
\tilde{H} (x,y) = -\dfrac{1}{|| 2y^2||},
\end{align}
where $||.||$ is the norm respect to the round geometry of $S^2$. Note that the Hamiltonian flow of the Delaunay Hamiltonian is a reparametrized geodesic flow on $S^2$. 

Applying the stereographic projection \ref{c1} to the Delaunay Hamiltonian becomes 
\begin{align}
\tilde{H}(q,p)= -\dfrac{2}{(|q|+1)^2 |p|^2}.
\end{align}
The property 
\begin{align}
\Phi_{LS}^* H =\tilde{H},
\end{align}
of the Ligon-Schaaf symplectomorphism guarantees the Ligon-Schaaf symplectomorphism maps the Hamiltonian vector field of the Kepler problem to the Hamiltonian vector field of the Delaunay Hamiltonian. 

The Ligon-Schaaf interchanges angular momentum in $\mathbb{R}^2$ with the first component of the angular momentum on $S^2$. Therefore, applying the Ligon-Schaaf symplectomorphism and the stereographic projection on the Hamiltonian of the RKP becomes 
\begin{align}
K(q,p) = \tilde{H}(q,p) + L(q,p) = -\dfrac{2}{(|q|+1)^2|p|^2}+q_1p_2 - q_2p_1.
\end{align}
If we let $q$ and $p$ as complex numbers, i.e. $q=q_1+iq_2$ and $p=p_1+ip_2$. We can write 
\begin{align}\label{2}
K(q,p) = \tilde{H}(q,p) + L(q,p) = -\dfrac{2}{(|q|+1)^2|p|^2} + Im(\bar{q} \cdot p).
\end{align}
We know that the Levi-Civita transformation is a 2:1 map which up to a constant factor is symplectic when we think of $\mathbb{C}^2$ as $T\mathbb{C}$. It pulls back the geodesics flow on $S^2$ to the flow of two uncoupled oscillators. 

We apply the Levi-Civita regularization. To this purpose, we apply $\dfrac{u}{\bar{v}}$ and $2v^2$ in the relation \ref{2} instead of $q$ and $p$ respectively. Then we get the following identity 
\begin{align} \label{c5}
\tilde{H}(u,v) +L(u,v) & = - \dfrac{2}{(|\frac{u}{\bar{v}}| +1)^2 (|2v^2|)^2} +Im(\dfrac{\bar{u}}{\bar{v}}\cdot 2v^2) \\ \nonumber
&= -\dfrac{2}{2(|u|^2+|v|^2)^2}+ 2 Im(\bar{u}v) \\ \nonumber
& = -\dfrac{1}{2(|u|^2 +|v|^2)^2}+2(u_1v_2 - u_2v_1). 
\end{align}
We introduce the function 
\begin{align}
\mu : T^* \mathbb{C} & \longrightarrow [0, \infty) \times \mathbb{R} \subset \mathbb{R}^2 \\
(u,v) & \mapsto 
\begin{cases}
\frac{1}{2} (|u|^2+|v|^2)\\ 
u_1v_2 - u_2v_1.
\end{cases}
\end{align}
This is the momentum map of the torus action on $T^* \mathbb{C}$. 

Note that in view of the elementary inequality 
\begin{align}
|ab| \leq \dfrac{1}{2} (a^2+b^2),
\end{align}
follows that $|\mu_2| \leq \mu_1$.Therefore, componentwise we define 
\begin{align}
\mu_1:= \dfrac{|u|^2+|v|^2}{2}, \:\:\:\ \mu_2 := u_1v_2 +u_2v_1. 
\end{align}
If we define the Hamiltonian of the RKP with $K$ and use the above notation and definitions, then we have the following proposition. 
\begin{proposition}
Given the Ligon-Schaaf symplectomorphism and the Levi-Civita regularization, the pullback of $K$ becomes
\begin{align}\label{c6}
\mathcal{L}^* \Phi_{LS}^* (K) = -\dfrac{1}{8\mu_1^2} + 2 \mu_2.
\end{align}
\end{proposition}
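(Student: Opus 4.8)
The plan is to obtain the proposition as a direct rewriting of the computation in \ref{c5} in terms of the momentum--map components $\mu_1$ and $\mu_2$; the genuine content lies in the chain of substitutions leading to \ref{c5}, after which the statement is pure bookkeeping. First I would record that by the property $\Phi_{LS}^* H = \tilde H$ together with the stereographic identification \ref{c1}, the Ligon--Schaaf pullback of $K$ is the function displayed in \ref{2},
\begin{align}
\Phi_{LS}^*(K)(q,p) = -\dfrac{2}{(|q|^2+1)^2|p|^2} + \mathrm{Im}(\bar q \cdot p),
\end{align}
where the rotational summand is accounted for by the earlier observation that Ligon--Schaaf interchanges the planar angular momentum $L$ with the relevant component of angular momentum on $S^2$, so no separate verification of that term is needed.

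Next I would apply the Levi--Civita map by substituting $q = u/\bar v$ and $p = 2v^2$. For the potential term the key facts are $|q|^2 = |u|^2/|v|^2$ and $|p|^2 = 4|v|^4$, so that the denominator collapses,
\begin{align}
(|q|^2+1)^2|p|^2 = \Bigl(\dfrac{|u|^2+|v|^2}{|v|^2}\Bigr)^2 \cdot 4|v|^4 = 4(|u|^2+|v|^2)^2,
\end{align}
and the factors of $|v|$ cancel cleanly. For the rotational term one expands $\mathrm{Im}(\bar u v) = u_1 v_2 - u_2 v_1$. Together these reproduce the intermediate identity \ref{c5},
\begin{align}
\mathcal L^* \Phi_{LS}^*(K) = -\dfrac{1}{2(|u|^2+|v|^2)^2} + 2(u_1 v_2 - u_2 v_1).
\end{align}

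Finally I would pass to the momentum--map coordinates. Since $\mu_1 = \tfrac12(|u|^2+|v|^2)$ gives $(|u|^2+|v|^2)^2 = 4\mu_1^2$, the potential term becomes $-1/(8\mu_1^2)$; and since $\mu_2 = u_1 v_2 - u_2 v_1$, the rotational term becomes $2\mu_2$. Adding the two yields $\mathcal L^* \Phi_{LS}^*(K) = -1/(8\mu_1^2) + 2\mu_2$, as claimed. I do not anticipate a serious obstacle: the only place demanding care is the Levi--Civita substitution in the denominator, where the powers of $|v|$ introduced by $q = u/\bar v$ and by $p = 2v^2$ must be tracked so that they cancel to produce $4(|u|^2+|v|^2)^2$. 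I would also verify the sign convention for $\mu_2$, since compatibility with the rotational term $2(u_1 v_2 - u_2 v_1)$ appearing in \ref{c5} forces $\mu_2 = u_1 v_2 - u_2 v_1$.
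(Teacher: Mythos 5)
Your proposal is correct and follows essentially the same route as the paper: the paper's own proof is just the remark ``this follows from the discussion above,'' and that discussion is precisely the chain you wrote out --- the Levi--Civita substitution $q = u/\bar v$, $p = 2v^2$ into the pulled-back Hamiltonian, the cancellation of the powers of $|v|$ to get $-\tfrac{1}{2}(|u|^2+|v|^2)^{-2} + 2(u_1v_2 - u_2v_1)$, and the rewriting via $\mu_1$, $\mu_2$. You also correctly silently repair two slips in the paper's text (the denominator should read $(|q|^2+1)^2|p|^2$ rather than $(|q|+1)^2|p|^2$, and the sign in $\mu_2 = u_1v_2 - u_2v_1$), both of which are forced by consistency with \ref{c5} and \ref{c6}.
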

\begin{proof}
This follows from the discussion above. 
\end{proof}
We show that the symplectic manifold $\mathbb{C}\oplus \mathbb{C}$ and the cotangent bundle $T^*\mathbb{C}$ are symplectomorphic.

\begin{proposition}\label{Pro2}
There exists a linear symplectomorphism between the symplectic manifold $\mathbb{C} \oplus \mathbb{C}$ and the cotangent bundle $T^\ast \mathbb{C}$. In other words, we have the linear symplectomorphism 
\begin{align}
\mathit{S}: (\mathbb{C} \oplus \mathbb{C},\omega_0) \longrightarrow (T^\ast \mathbb{C},\omega_1).
\end{align}
\end{proposition}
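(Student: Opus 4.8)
The plan is to reduce the statement to the linear version of Darboux's theorem: any two symplectic vector spaces of the same finite dimension are linearly symplectomorphic, because each admits a symplectic basis in which its form becomes the standard one. Both $\C \oplus \C$ and $T^\ast \C$ are real four-dimensional symplectic vector spaces, so the abstract existence of $S$ is immediate; the only substantive task is to exhibit an explicit $S$ adapted to the coordinates fixed above, so that the later identification of the momentum map $\mu$ and the torus action becomes transparent.

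First I would fix coordinates on both sides. Writing $u = u_1 + iu_2$ and $v = v_1 + iv_2$, the standard form on $\C \oplus \C$ reads
\[
\omega_0 = du_1 \wedge du_2 + dv_1 \wedge dv_2,
\]
whereas, writing the base point of $T^\ast \C$ as $q = q_1 + iq_2$ and the fibre covector as $p = p_1 + ip_2$, the canonical form reads
\[
\omega_1 = dq_1 \wedge dp_1 + dq_2 \wedge dp_2.
\]
The two forms differ only in which pairs of real coordinates are declared conjugate: under $\omega_0$ the real and imaginary parts of each complex coordinate are paired, while under $\omega_1$ each base coordinate is paired with its dual fibre coordinate. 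Next I would define $S$ by the linear rule $q = u_1 + i v_1$, $p = u_2 + i v_2$, that is $(u_1,u_2,v_1,v_2) \mapsto (q_1,q_2,p_1,p_2) = (u_1,v_1,u_2,v_2)$, which is plainly a linear isomorphism. Pulling back term by term gives $S^\ast(dq_1 \wedge dp_1) = du_1 \wedge du_2$ and $S^\ast(dq_2 \wedge dp_2) = dv_1 \wedge dv_2$, so that $S^\ast \omega_1 = du_1 \wedge du_2 + dv_1 \wedge dv_2 = \omega_0$, which is exactly the required compatibility.

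I do not expect a genuine analytic obstacle here: the content is purely linear-algebraic bookkeeping, and several other choices of $S$ (or an invariant Gram--Schmidt construction of a symplectic basis) would serve equally well. The one point that deserves care is the choice of convention, namely selecting $S$ so that it is consistent with the momentum map $\mu$ and the torus action of the previous subsection; this is what guarantees that the concave toric domain produced downstream from $\mathcal{L}^\ast \Phi_{LS}^\ast(K)$ is the correct one rather than one differing by an unwanted linear change of coordinates.
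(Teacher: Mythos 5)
Your proof of the proposition as literally stated is correct, and it follows the same route as the paper's own proof: fix real coordinates, exhibit an explicit linear map, and check that it pulls the canonical form on $T^\ast \mathbb{C}$ back to the standard form on $\mathbb{C}\oplus\mathbb{C}$. For the bare existence claim, your appeal to the linear Darboux theorem is already sufficient.

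However, there is a genuine problem with the role you claim your map plays. In the paper, this proposition is not about abstract existence: it supplies the \emph{specific} symplectomorphism $S$ (the one with the $\tfrac{1}{\sqrt{2}}$ coefficients, $u_1 = \tfrac{1}{\sqrt{2}}(y_1-y_2)$, $u_2 = \tfrac{1}{\sqrt{2}}(x_1+x_2)$, $v_1 = \tfrac{1}{\sqrt{2}}(x_2-x_1)$, $v_2 = \tfrac{1}{\sqrt{2}}(y_1+y_2)$) which intertwines the two torus actions, i.e.\ satisfies $\mu_1 \circ S = \tfrac{1}{2}\bigl(|z_1|^2+|z_2|^2\bigr) = \tfrac{1}{2\pi}(\nu_1+\nu_2)$ and $\mu_2 \circ S = \tfrac{1}{2}\bigl(|z_1|^2-|z_2|^2\bigr) = \tfrac{1}{2\pi}(\nu_1-\nu_2)$. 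This equivariance is exactly what makes the diagram $\mu \circ S = \bar{S}\circ \nu$ commute and legitimizes the identification $X_{\Omega'} = \mu^{-1}(\Omega') = S(X_\Omega)$ used in the rest of the section. Your map, sending $(u_1,u_2,v_1,v_2)$ to $(q_1,q_2,p_1,p_2)=(u_1,v_1,u_2,v_2)$, fails this property: pulling back the second component of the momentum map gives
\begin{align*}
\mu_2 \circ S \;=\; q_1p_2 - q_2p_1 \;=\; u_1v_2 - u_2v_1 \;=\; \mathrm{Im}(\bar{u}\,v),
\end{align*}
which is not invariant under the standard torus action $(u,v)\mapsto (e^{i\theta_1}u, e^{i\theta_2}v)$ on $\mathbb{C}\oplus\mathbb{C}$, hence is not a function of $\nu = (\pi|u|^2,\pi|v|^2)$; consequently no map $\bar{S}$ of the quadrant can make the diagram commute for your $S$. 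So your closing assertion that your choice is adapted so that the identification of $\mu$ and the torus action becomes transparent is false: your $S$ proves the proposition as stated but cannot be substituted into the construction that follows. To serve the paper's purpose you must either take the paper's $S$, or explicitly verify the intertwining property $\mu \circ S = \bar{S}\circ\nu$ for whatever linear symplectomorphism you choose.
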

\begin{proof}
Consider the symplectic form on $T^\ast \mathbb{C}$ as 
\begin{align}
\omega_1 = du_1 \wedge dv_1 + du_2 \wedge dv_2. 
\end{align}
Let $(z_1,z_2) \in \mathbb{C}^2 $ such that $z_1=x_1+iy_1 $ and $z_2=x_2+iy_2$. We define the following linear map 
\begin{align}
\mathit{S}: \mathbb{C}^2 \longrightarrow T^\ast \mathbb{C}
\end{align}
as 
\begin{align} \label{16}
u_1 & \longrightarrow \frac{1}{\sqrt{2}}(y_1-y_2) \\ \nonumber 
u_2 & \longrightarrow \frac{1}{\sqrt{2}}(x_1+x_2) \\ \nonumber
v_1 & \longrightarrow \frac{1}{\sqrt{2}}(x_2-x_1) \\ \nonumber
v_2 & \longrightarrow \frac{1}{\sqrt{2}}(y_1+y_2) \\ \nonumber
\end{align}
To demonstrate that $S$ interchanges the symplectic forms $\omega_0$ and $\omega_1$, we can compute using \ref{16}. 
 Thus we have 
\begin{align}
\mathit{S}^\ast (\omega_1) & = \mathit{S}^\ast (du_1 \wedge dv_1 + du_2 \wedge dv_2) \\ \nonumber
& = (\frac{1}{\sqrt{2}} (dy_1-dy_2) \wedge \frac{1}{\sqrt{2}} (dx_2-dx_1) ) +(\frac{1}{\sqrt{2}} (dx_1-dx_2) \wedge \frac{1}{\sqrt{2}} (dy_1+ dy_2)) \\ \nonumber
&= dx_1 \wedge dy_1 + dx_2 \wedge dy_2 \\ \nonumber
& =\omega_0.
\end{align}
\end{proof}
We extend the function \ref{c6} to $T^* \mathbb{C} \setminus \{0\}$ and define 
\begin{align} \label{c7}
\tilde{K}:& T^* \mathbb{C} \setminus \{0\} \longrightarrow \mathbb{R} \\
\tilde{K}:&= \dfrac{-1}{8 \mu_1^2} + 2 \mu_2. \nonumber
\end{align}
Using the above function gives us a concave toric domain for the RKP on a coordinate system which is rotated in view of the proposition \ref{Pro2}.

We make the following rotations. Denote the first quarter in $\mathbb{R}^2$ by $Q:= [0,\infty) \times [0,\infty)$ and define 
\begin{align}
Q_{\frac{1}{2}} := \{(x,y) \in \mathbb{R}^2 \: : \: x \geq 0\; , |y| < x \},
\end{align}
where $Q_{\frac{1}{2}}$  is the first quadrant of $\mathbb{R}^2$ that rotated by 45 degree clockwise. 

Assume $\Omega \subset Q$ is close in the first quarter in $\mathbb{R}^2$. A toric domain is defined by \begin{align}
X_\Omega := \nu^{-1} (\Omega),
\end{align}
where
\begin{align}
\nu=(\nu_1 , \nu_2) : \mathbb{C}^2 &\longrightarrow Q \subset \mathbb{R}^2 \\
(z_1 ,z_2)& \mapsto (\pi |z_1|^2 , \pi|z_2|^2).  \nonumber
\end{align}
Note that $\nu$ is a momentum map for the torus action 
\begin{align}
(\nu_1 , \nu_2)(z_1,z_2) = (e^{i \theta_1} z_1 , e^{i\theta_2}z_2)
\end{align}
on $\mathbb{C}^2$. 
We define the symplectic 4-manifold with boundary as 
\begin{align}
X_\Omega := \{ z=(z_1,z_2) \in \mathbb{C}^2 \;:\; \pi (|z_1|^2 , |z_2|^2) \in \Omega \}.
\end{align}
$\mathbf{Recall:}$ The concave toric domain according Hutchings \cite{key-12} is defined as follow.
\begin{definition}
 We say that a toric domain 
$X_\Omega$
is a concave toric domain if $\Omega$ is a closed region bounded by the horizontal segment from
$(0,0)$
to
$(a,0)$,
the vertical segment from $(0,0)$ to $(0,b)$ and graph of a convex function 
$f:[0,a] \longrightarrow [0,b]$ with
$f(0)=b$
and 
$f(a)=0$,
where 
$a>0 $ 
and 
$b>0$.
\end{definition}
Now we define a Special Concave Toric Domain (SCTD) as follows.
\begin{definition}
A concave toric domain $X_\Omega \subset \mathbb{C}^2$ is called special if the function $f$ satisfies the additional property $f'(t) \geq -1$ for $t \in [0, a]$.
\end{definition} 
We compare the Hutchings Concave Toric Domain (CTD) and the SCTD for the RKP. 

Define $\bar{S}$ by 
\begin{align}
\bar{S}: Q \longrightarrow Q_{\frac{1}{2}}
\end{align}
which is a clockwise 45 degree rotation composed with a $\dfrac{1}{\sqrt{2} \pi}$ dilation.

Using the above notation, we can obtain the following relations between momentum maps $\nu$ and $\mu$  for the torus actions $\mathbb{C}^2$  and $T^* \mathbb{C}^2$.  
 
\begin{align}
\bar{S}(\dfrac{1}{2 \pi} (\nu_1+\nu_2)) & = \mu_1 \\ \nonumber
\bar{S}(\dfrac{1}{2 \pi} (\nu_1-\nu_2)) & = \mu_2.
\end{align}
Using these equalities gives us the following commutative diagram 

\begin{align}
\begin{array}{ccc} 
\mathbb{C} \oplus \mathbb{C} & \xrightarrow{S} & T^*\mathbb{C} \\
{\nu}\downarrow && \downarrow{\mu} \\
Q & \xrightarrow{\bar{S}} & Q_{\frac{1}{2}}
\end{array}
\end{align}

We alternatively define a concave toric domain for $\Omega' $ 
\begin{align}
\Omega ' := \bar{S} (\Omega) \subset Q_{\frac{1}{2}},
\end{align}
by 
\begin{align}
X_{\Omega'} = \mu^{-1} (\Omega') = S(X_\Omega)
\end{align}
in $T^*\mathbb{C}$. 

Assuming the concave toric domain as a subset of $T^* \mathbb{C}$ instead of $\mathbb{C}^2$, we consider $\Omega$ as a closed subset of $\Q_{\frac{1}{2}}$, without the prime.

Using the new convention, the SCTD can be defined as follows. 
\begin{remark}\label{c9}
Let's use the above notations of $\mathbb{C}^2$ and $T^\ast \mathbb{C}$ to make things clearer. A   toric domain $X_\Omega$ is a special concave toric domain if and only if there exists a convex function 
\begin{align}
g : [a,b] \longrightarrow \mathbb{R}, \qquad 0 < a<b<\infty ,
\end{align}
with properties $g(a)=a$, $g(b) = -b$
such that $\Omega \subset Q_{\frac{1}{2}}$ is bounded by the segment 
$\{ (t,t ): t \in [0,a]\} , \{(t,-t) : t \in [0,b]\}$
and the graph of the convex function $g$. 
\end{remark}
\begin{remark} \label{c10}
We are working with $\Omega \subset Q_{\frac{1}{2}}$. If $\Omega$ satisfies the conditions of remark \ref{c9} we refer to 
$ X_\Omega := \mu^{-1} (\Omega)$ as a special concave toric domain. 

Assume $ c  \leq -\frac{3}{2}$, we define a closed subset of $Q_{\frac{1}{2}}$ by 
\begin{align}
\mathcal{K}_c := \mu (\tilde{K}^{-1}( -\infty , c)) \subset Q_{\frac{1}{2}}.
\end{align}
Note that if $c < -\dfrac{3}{2}$ then $\mathcal{K}_c $ has two connected components, one bounded and one unbounded, i.e. we write 
\begin{align}
\mathcal{K}_c = \mathcal{K}_c^b \cup \mathcal{K}_c^u,
\end{align}
for $\mathcal{K}_c^b$ the bounded connected component and $\mathcal{K}_c^u$ the unbounded connected component. 

For $c =-\dfrac{3}{2}$ the two sets become connected at singularity which is the point $(\frac{1}{2} , -\frac{1}{2})$. 
\end{remark}
\begin{theorem}
For $c \leq - \frac{3}{2}$, we have
\begin{align}
\tilde{K}^{-1}( -\infty, c) = X_{\mathcal{K}_c^b} \cup X_{\mathcal{K}_c^u} \subset T^\ast \mathbb{C}
\end{align}
and 
$X_{\mathcal{K}_c^b}$ is the SCTD. 
\end{theorem}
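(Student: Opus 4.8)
The plan is to exploit the fact that $\tilde K$ depends on the coordinates only through the momentum map $\mu=(\mu_1,\mu_2)$, so that the whole statement reduces to a two–dimensional analysis of a region in $Q_{\frac12}$. First I would record the torus invariance. By \eqref{c7} we have $\tilde K=-\tfrac{1}{8\mu_1^2}+2\mu_2$, so $\tilde K$ is constant on the fibres of $\mu$; hence $\tilde K^{-1}(-\infty,c)=\mu^{-1}(\mathcal{K}_c)$ with $\mathcal{K}_c=\mu(\tilde K^{-1}(-\infty,c))$ exactly as in Remark \ref{c10}. Since $\mathcal{K}_c=\mathcal{K}_c^b\cup\mathcal{K}_c^u$ decomposes into its bounded and unbounded components, taking preimages gives $\tilde K^{-1}(-\infty,c)=\mu^{-1}(\mathcal{K}_c^b)\cup\mu^{-1}(\mathcal{K}_c^u)=X_{\mathcal{K}_c^b}\cup X_{\mathcal{K}_c^u}$, which is the first assertion. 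The real content is therefore the planar geometry of $\mathcal{K}_c$. Writing $(x,y)=(\mu_1,\mu_2)\in Q_{\frac12}$ (so $x\ge 0$, $|y|\le x$), the condition $\tilde K<c$ reads $y<g(x)$ with $g(x):=\tfrac{c}{2}+\tfrac{1}{16x^2}$, a strictly decreasing convex function ($g'(x)=-\tfrac{1}{8x^3}<0$, $g''(x)=\tfrac{3}{8x^4}>0$) with $g(0^+)=+\infty$ and $g(\infty)=c/2<0$. Thus $\mathcal{K}_c$ is the part of $Q_{\frac12}$ lying below the graph of $g$.

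The crux is locating where the graph of $g$ meets the two edges $y=x$ and $y=-x$. On the upper edge, $g(x)=x$ is equivalent to $k(x):=16x^3-8cx^2-1=0$; since $k'(x)=16x(3x-c)>0$ for $x>0$ (as $c<0$) and $k(0)<0<k(\infty)$, there is a unique positive root $x_0$. On the lower edge, $g(x)=-x$ is equivalent to $h(x):=16x^3+8cx^2+1=0$; here $h$ has its only positive critical point at $x^{\ast}=-c/3$, a minimum, with $h(x^{\ast})=\tfrac{8c^3}{27}+1$. The sign of this minimum is governed entirely by $c$: one checks $h(x^{\ast})<0\iff c<-\tfrac32$ and $h(x^{\ast})=0\iff c=-\tfrac32$, where $h=(2x-1)^2(4x+1)$ has a double root at $x=\tfrac12$. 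Hence for $c<-\tfrac32$ the lower edge is met at exactly two points $x_-<x_+$, while at $c=-\tfrac32$ they coalesce at $(\tfrac12,-\tfrac12)$, precisely the singular point of Remark \ref{c10}.

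Tracing the decreasing graph of $g$ across the strip then displays the two components: for $x\in(x_0,x_-)$ the graph cuts through the interior of $Q_{\frac12}$ and bounds $\mathcal{K}_c^b$ from above; for $x\in(x_-,x_+)$ the graph dips below the edge $y=-x$ so the strip lies entirely above it; and for $x>x_+$ the graph re-enters the strip and bounds the unbounded piece $\mathcal{K}_c^u$. The ordering $x_0<x_-$ comes for free from monotonicity, since $g(x_0)=x_0>0>-x_-=g(x_-)$ forces $x_0<x_-$ as $g$ is strictly decreasing. Consequently the boundary of $\mathcal{K}_c^b$ consists of the segment $\{(t,t):t\in[0,x_0]\}$ on $y=x$, the segment $\{(t,-t):t\in[0,x_-]\}$ on $y=-x$, and the graph of $g$ on $[x_0,x_-]$, with $g$ convex, $g(x_0)=x_0$, $g(x_-)=-x_-$ and $0<x_0<x_-$. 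Setting $a=x_0$, $b=x_-$ matches Remark \ref{c9} verbatim, so $X_{\mathcal{K}_c^b}$ is a special concave toric domain. The only routine check left is that the graph of $g$ stays inside $Q_{\frac12}$ on $[x_0,x_-]$, i.e. $-x\le g(x)\le x$; this holds because $x-g(x)$ and $x+g(x)$ vanish at $x_0$ and $x_-$ respectively and are monotone/convex in between, so neither changes sign on the interior.

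I expect the main obstacle to be the edge–intersection analysis: reducing the two contact conditions to the cubics $k$ and $h$ and pinning the threshold $c=-\tfrac32$ to the sign of the minimum value $\tfrac{8c^3}{27}+1$, since it is this sign change that produces the transition from one component to two and identifies the singular contact point $(\tfrac12,-\tfrac12)$. Everything else — the torus invariance, the convexity of $g$, and the containment in $Q_{\frac12}$ — is bookkeeping.
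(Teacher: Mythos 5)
Your proposal is correct and takes essentially the same route as the paper: the paper's entire proof is the observation that, by \ref{c7}, the region $\mathcal{K}_c$ is cut out of $Q_{\frac{1}{2}}$ by the graph of the convex function $x \mapsto \frac{c}{2}+\frac{1}{16x^2}$, which is precisely your $g$. The only difference is completeness --- where the paper asserts the result ``follows immediately'' from this convexity, you actually carry out the supporting analysis (the edge cubics $k$ and $h$, the threshold value $\frac{8c^3}{27}+1$ pinning down $c=-\frac{3}{2}$, and the contact point $(\frac{1}{2},-\frac{1}{2})$), and all of it checks out.
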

\begin{proof}
After applying all the necessary transformations, which are explained in the above, the statement can be easily derived from reference \ref{c7}, given the function
$(0 , \infty) \longrightarrow \mathbb{R}$
\begin{align*}
x \mapsto \dfrac{1}{16 x^2} 
\end{align*}
is convex. 
\end{proof}
We will see the graphs of the SCTD for the energies $c\leq -\dfrac{3}{2}$, $c =-\dfrac{3}{2}$ and $c> -\dfrac{3}{2}$ in the following figures. 

 \begin{figure}[htbp]
\includegraphics[scale=0.15]{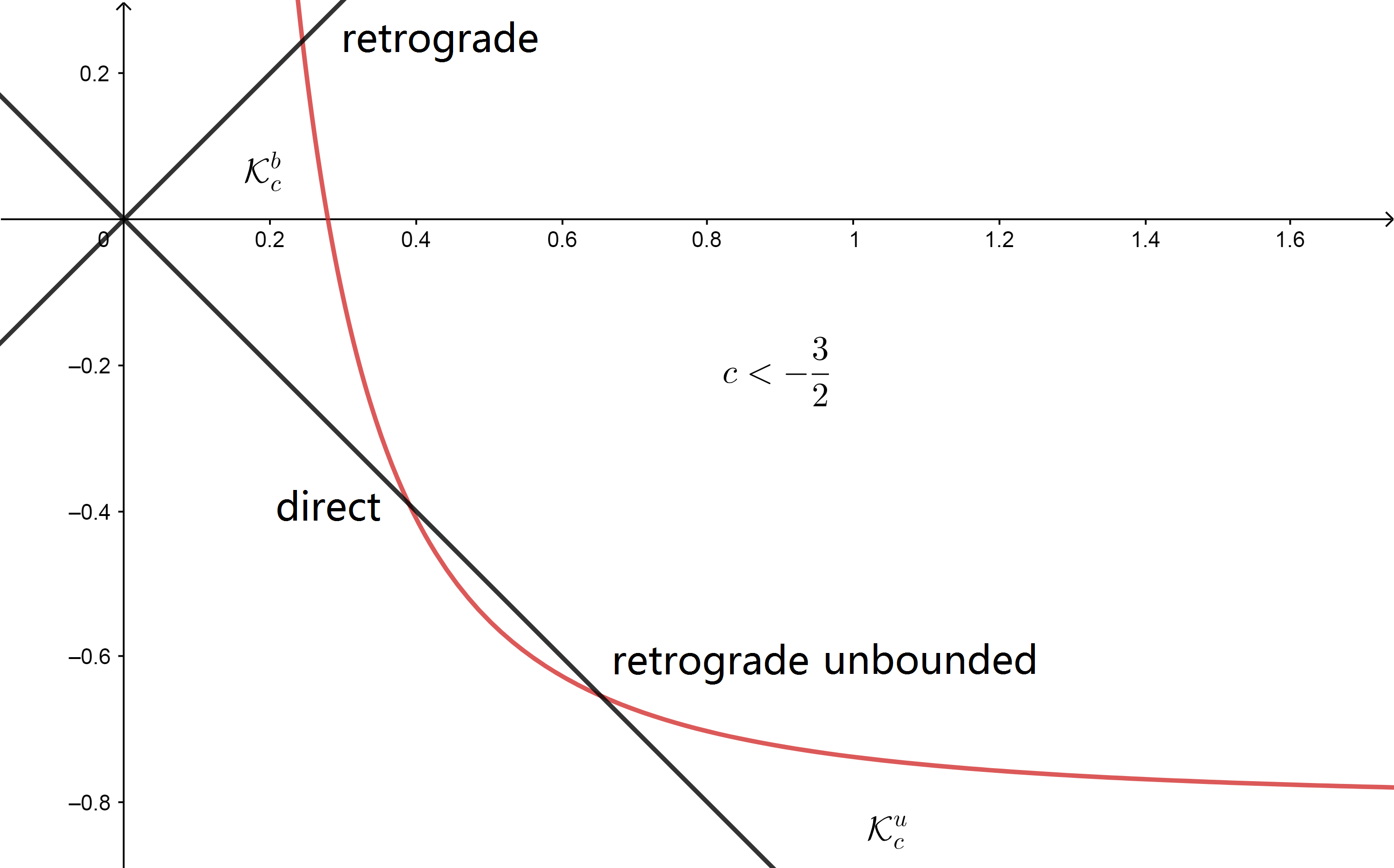}
  \captionof{figure}{The direct  and the  retrograde orbits for an energy $c <-\dfrac{3}{2}$}
   \label{picSCTD4}
  \end{figure}
 
\begin{figure}[htbp]
\includegraphics[scale=0.15]{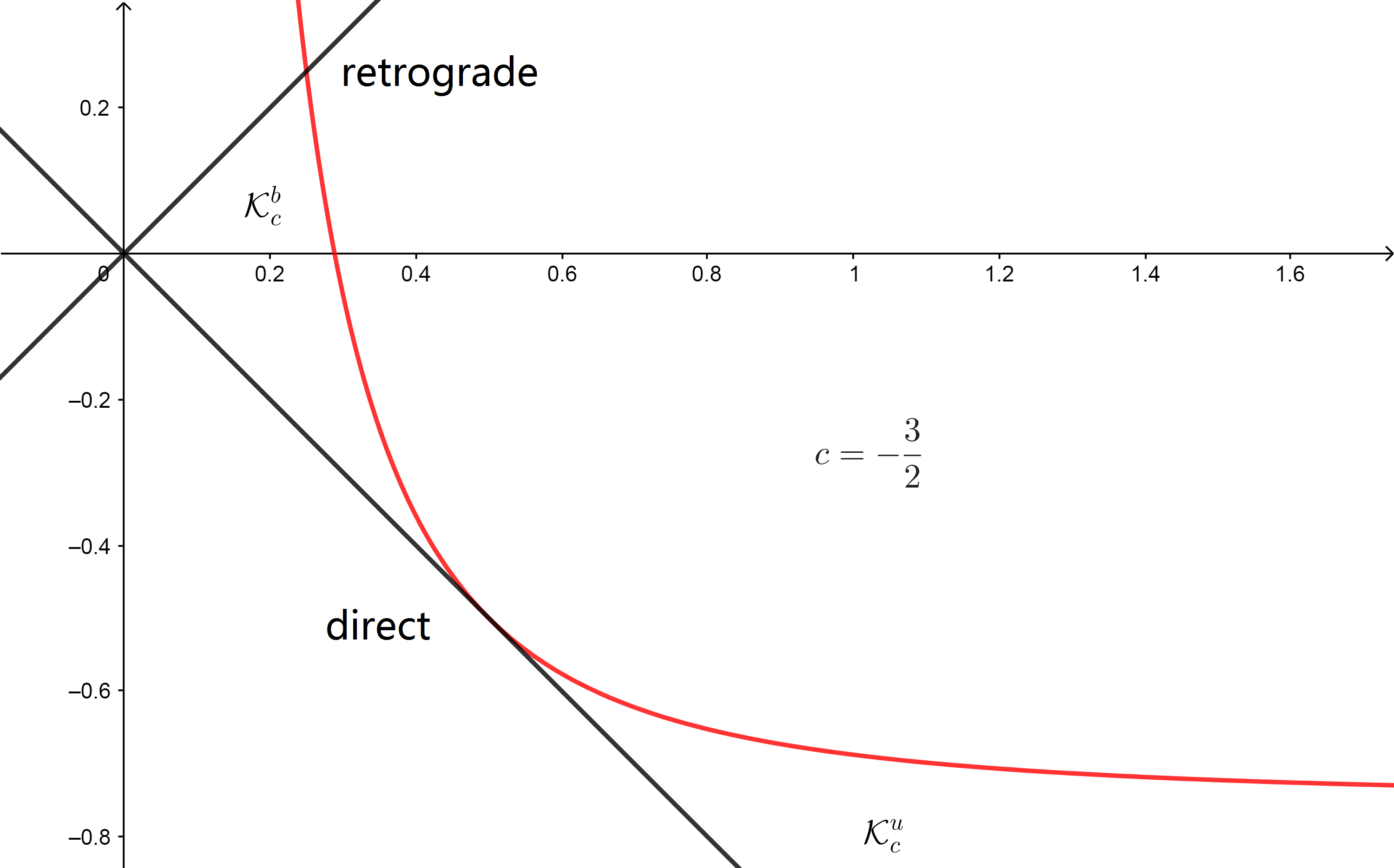}
  \captionof{figure}{The  direct orbit for the energy $c =-\dfrac{3}{2}$}
  \label{picSCTD5}
\end{figure}

\begin{figure}[htbp]
\includegraphics[scale=0.15]{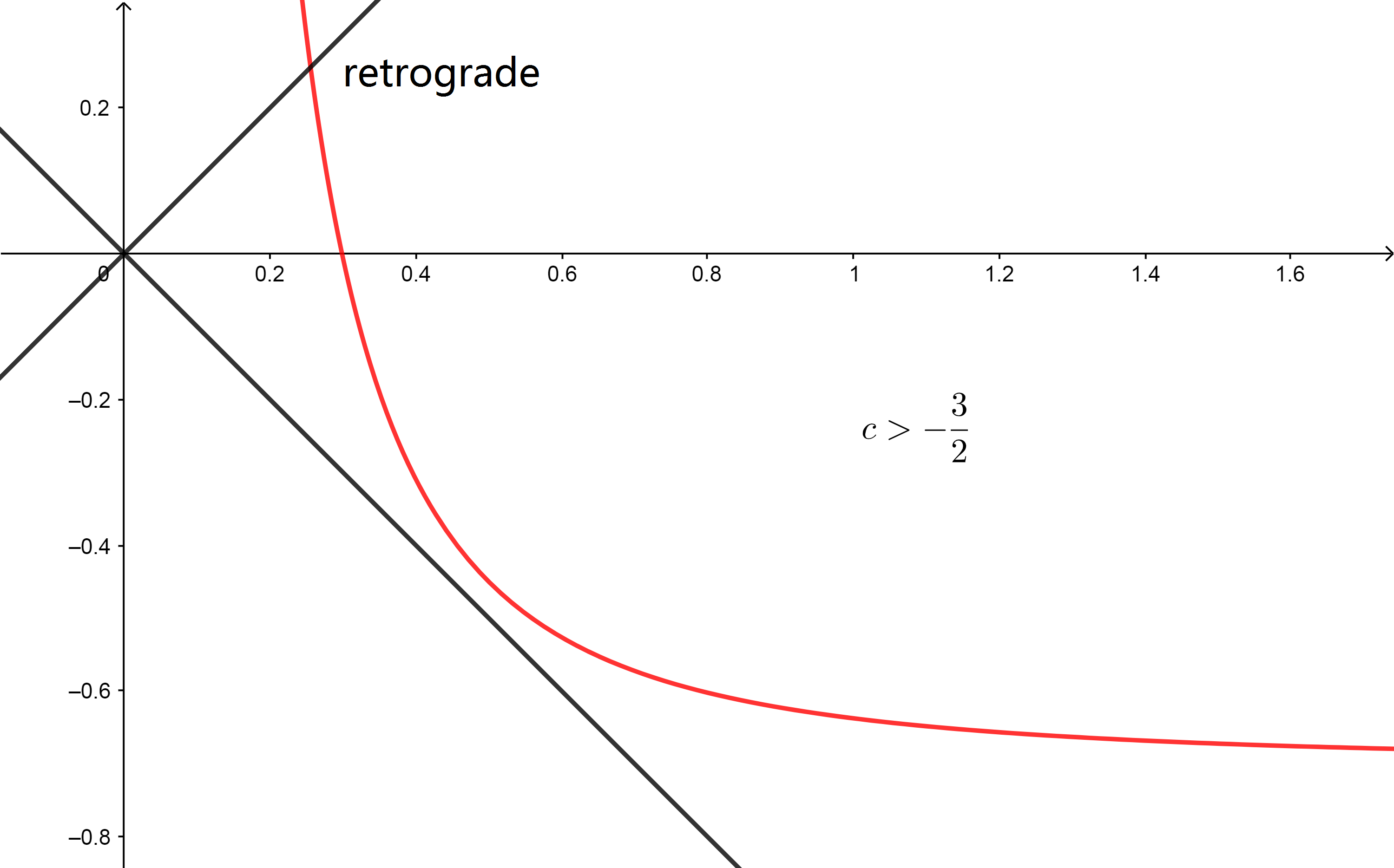}  
 \captionof{figure}{There is no    direct orbit  for  energy $c >-\dfrac{3}{2}$}
  \label{picSCTD6}
  \end{figure}
  \newpage
\section{Construction of a new tree:}
Now, we are introducing a new tree that can be used to calculate the slopes of the SCTD. These slopes will be useful in future SCTD work. The new tree is an extension of the Stern-Brocot tree, which we will explain shortly. To start, let us briefly recap the Stern-Brocot tree.

\subsection{ The Stern-Brocot tree} The Stern-Brocot tree was introduced by Moritz-Stern 1858 and Achille Brocot 1861. The Stern-Brocot tree is a complete infinity binary tree whose nodes are labeled by a unique rational number. 

There is more information about this tree and the Calkin-Wilf tree and their relations in \cite{key-2}.

We use induction and a mediant method to construct the Stern-Brocot tree. Another way to obtain the tree is via the Calkin-Wilf tree \cite{key-2}.

\begin{definition}
A mediant is a fraction such that its numerator is the sum of the numerators of two other fractions and its denominator is the sum of the denominators of two other fractions.  
\end{definition} 
The Stern-Brocot tree begins at level -1, with the pseudo-fractions $\dfrac{0}{1}$ and $\dfrac{1}{0}$. To generate a new level, we use the previous level and the mediants to create new fractions. These new fractions are then arranged in increasing order on a line, and this process is repeated to generate subsequent levels of the Stern-Brocot tree. In other words, we can express this as an induction process.
\begin{enumerate} 
\item[Stage -1:]
We start with the auxiliary  labels $\frac{0}{1}$ and $\frac{1}{0}$ lowest to highest terms. Stage -1, we do not really consider as part of the tree but this level is used in the inductive constructive of the tree. \\
\item[Stage 0:] The root is $\dfrac{1}{1}$ which can be interpreted as mendiant of stage -1. \\
\item[Stage 1:]
We add the mediant of the boundaries. \\
\item[Stage n+1:]
We add the mediants of all consecutive fractions in the tree including the boundaries from the lowest to highest.
\end{enumerate}
Therefore we have the following tree. 

\begin{center} 
\begin{tikzpicture}[<-,->=stealth', auto, semithick,
level/.style={sibling distance=60mm/#1}
]
\node (z){$\dfrac{1}{1}$}
child {node (a) {$\dfrac{1}{2}$}
child {node (b) {$\dfrac{1}{3}$}
child {node {$\dfrac{1}{4}$}
child {node {$\vdots$}} 
} 
child {node {$\dfrac{2}{5}$}
 child {node {$\vdots$}}}
}
child {node (g) {$\dfrac{2}{3}$}
child {node {$\dfrac{3}{5}$}
child {node {$\vdots$}}}
child {node {$\dfrac{3}{4}$}
child {node {$\vdots$}}}
}
}
child {node (j) {$\dfrac{2}{1}$}
child {node (k) {$\dfrac{3}{2}$}
child {node {$\dfrac{4}{3}$}
child {node {$\vdots$}}}
child {node {$\dfrac{5}{3}$}
child {node {$\vdots$}}}
}
child {node (l) {$\dfrac{3}{1}$}
child {node {$\dfrac{5}{2}$}
child {node {$\vdots$}}}
child {node (c){$\dfrac{4}{1}$}
child {node {$\vdots$}}
}
}
};
child [grow=down] {
edge from parent[draw=none]
};
(z) -- ++(0,1cm);
\end{tikzpicture}
\end{center}

For convenience, we define a labeling for the nodes of the tree. We start from a root-like $\dfrac{a}{b}$ and try to write the next level according to the previous one such that the lowest node is the first and the highest one is the last node. In the new level, each root like $\dfrac{a}{b}$ has two children such that the left child is less than $\dfrac{a}{b}$ and the right child is bigger than $\dfrac{a}{b}$. We called the left child even and denote it with zero and we called the right child odd and denote it with 1. We use these notations to find the nodes on the Stern-Brocot tree and also determine the place of new tree nodes. 
\subsection{The New Tree:}
Using the above notations, we explain the new tree which is useful to find the slopes and critical energy values of tori and asteroids in the SCTD. Consider the labeling of the Stern-Brocot tree and let the noes of the Stern-Brocot tree by the fractional number $\dfrac{k}{l}$. We write the node $\dfrac{k}{l}$ as a matrix $\left[
\begin{array}{ccc}
k \\
l 
\end{array}
\right]
$. Since we want to have the new tree on the rotated coordinate by 45 degree, we multiply the matrix of the node $\dfrac{k}{l}$ by $ 
\left[
\begin{array}{ccc}
1 & 1 \\
-1 & 1 
\end{array}
\right]$   which corresponds to a rotation by 45 degrees and a dilation by $\sqrt{2}$ in the coordinate system. Note that multiplicity and dilation do not influence the slope. Namely, 
\begin{align} 
\left[
\begin{array}{ccc}
1 & 1 \\
-1 & 1 
\end{array}
\right] 
\left[
\begin{array}{ccc}
k \\
l 
\end{array}
\right] = \dfrac{k+l}{-k+l}.
\end{align}
Now we replace the node $\dfrac{k}{l}$ in the Stern-Bropcot tree by the node $\dfrac{k+l}{-k+l}$. They follow the above method for all nodes of the Stern-Brocot tree to get the new tree. The new tree is 

\begin{center} 
\begin{tikzpicture}[<-,->=stealth', auto, semithick,
level/.style={sibling distance=60mm/#1}
]
\node (z){$\infty $}
child {node (a) {$\dfrac{3}{1}$}
child {node (b) {$\dfrac{2}{1}$}
child {node {$\dfrac{5}{3}$}
child {node {$\vdots$}} 
} 
child {node {$\dfrac{7}{3}$}
 child {node {$\vdots$}}}
}
child {node (g) {$\dfrac{5}{1}$}
child {node {$\dfrac{4}{1}$}
child {node {$\vdots$}}}
child {node {$\dfrac{7}{1}$}
child {node {$\vdots$}}}
}
}
child {node (j) {$-\dfrac{3}{1}$}
child {node (k) {$-\dfrac{5}{1}$}
child {node {$-\dfrac{7}{1}$}
child {node {$\vdots$}}}
child {node {$-\dfrac{4}{1}$}
child {node {$\vdots$}}}
}
child {node (l) {$-\dfrac{2}{1}$}
child {node {$-\dfrac{7}{3}$}
child {node {$\vdots$}}}
child {node (c){$-\dfrac{5}{3}$}
child {node {$\vdots$}}
}
}
};
child [grow=down] {
edge from parent[draw=none]
};
(z) -- ++(0,1cm).
\end{tikzpicture}
\end{center}

Note that the nodes of the new tree are the slopes of the tori $T_{k,l}$ in the SCTD. Therefore, the slopes in the SCTD are determined uniquely by a rational number from the new tree.

\end{document}